\newtheorem*{theoremaux}{Theorem \theoremauxnum}
\gdef\theoremauxnum{1}
\newtheorem{lemma}{\bf Lemma}[section]
 \newtheorem{example}{\bf Example}[section]
\newtheorem{theorem}{\bf Theorem}[section]
\newtheorem{proposition}[lemma]{\bf Proposition}
\newtheorem{corollary}[lemma]{\bf Corollary}
\newtheorem{definition}{\bf Definition}[section]
\newtheorem{remark}{\bf Remark}[section]
\journal{~}
\begin{document}

\begin{frontmatter}



\title{On Co-Maximal Subgroup Graph of a Group}




\author{Angsuman Das\corref{cor1}}
\ead{angsumandas054@gmail.com}

\author{Manideepa Saha}
\ead{manideepasaha1991@gmail.com }
\address{Department of Mathematics, Presidency University, Kolkata, India} 

\author{Saba Al-Kaseasbeh}
\ead{saba.alkaseasbeh@gmail.com}
\address{Department of Mathematics, Faculty of Science, Tafila Technical University, Tafila, Jordan}

\cortext[cor1]{Corresponding author}

\begin{abstract}
The co-maximal subgroup graph $\Gamma(G)$ of a group $G$ is a graph whose vertices are non-trivial proper subgroups of $G$ and two vertices $H$ and $K$ are adjacent if $HK=G$. In this paper, we continue the study of $\Gamma(G)$, especially when $\Gamma(G)$ has isolated vertices. We define a new graph $\Gamma^*(G)$, which is obtained by removing isolated vertices from $\Gamma(G)$. We characterize when $\Gamma^*(G)$ is connected, a complete graph, star graph, has an universal vertex etc. We also find various graph parameters like diameter, girth, bipartiteness etc. in terms of properties of $G$.
\end{abstract}

\begin{keyword}
isolated vertex \sep solvable groups \sep maximal subgroup \sep nilpotent groups
\MSC[2008] 05C25, 05E16, 20D10, 20D15 

\end{keyword}

\end{frontmatter}

\section{Introduction}
The idea of associating graphs with groups, which started from Cayley graphs, is now an important topic of research in modern algebraic graph theory. The most prominent graphs defined on groups in recent years are power graphs \cite{Cameron-Ghosh}, enhanced power graphs \cite{enhanced-power-graph}, commuting graphs \cite{commuting-graph}, non-commuting graphs \cite{non-commuting-graph}, subgroup inclusion graphs \cite{subgroup-inclusion-graph} etc., and various works like \cite{hiranya} has been done on these topics. As a comprehensive survey on different graphs defined on groups, \cite{Cameron-survey} is an excellent reference. These graphs help us in understanding various group properties using graph theoretic interpretations. Following these footsteps, Akbari {\it et.al.} introduced {\it co-maximal subgroup graph} of a group $G$ in \cite{akbari}. The comaximal subgroup graph of $\mathbb{Z}_n$ has been studied in \cite{manideepa-sucharita-angsu}.

\begin{definition} Let $G$ be a group and $S$ be the collection of all non-trivial proper subgroups of $G$. The co-maximal subgroup graph $\Gamma(G)$ of a group $G$ is defined to be a graph with $S$ as the set of vertices and two distinct vertices $H$ and $K$ are adjacent if and only if $HK=G$.	
\end{definition}
Although the definition of subgroup product graph allows the possibility of $G$ being infinite, in this paper, we restrict ourselves to finite groups only. However if the results translate similarly to infinite groups, we will mention it separately. Note that the definition implies that the graph is undirected as $HK=G$ if and only if $KH=G$. In this paper, we continue the study of co-maximal subgroup graph of a group. We also introduced deleted co-maximal subgroup graph $\Gamma^*(G)$ which is obtained by removing the isolated vertices of $\Gamma(G)$. We study the existence of isolated vertices of $\Gamma(G)$, connectedness of $\Gamma^*(G)$ and characterize various properties of $\Gamma(G)$ and $\Gamma^*(G)$.

\subsection{Preliminaries}
We first recall some definitions and results on graph theory and group theory. For undefined terms and results on group theory, please refer to \cite{prime-power-groups-book} and \cite{rotman-book} and that of graph theory, please refer to \cite{west-graph-book}.
 
 Let $\Gamma$ be a graph. The {\it diameter} of a connected graph $\Gamma$ is the maximum distance between any two vertices in $\Gamma$. The minimum degree of a vertex in $\Gamma$ is denoted by $\delta(\Gamma)$ and $girth(\Gamma)$ denotes the length of a smallest cycle in $\Gamma$.

Let $G$ be a finite group. Denote by $\pi(G)$, the set of prime divisors of $|G|$. A proper subgroup $H$ of a group $G$ is said to be a {\it maximal subgroup} if there does not exist any proper subgroup of $G$ which properly contains $H$. A group $G$ is said to be {\it minimal non-cyclic} if $G$ is non-cyclic but every proper subgroup of $G$ is cyclic. 
The set of all maximal subgroups of the group $G$ is denoted by $Max(G)$. The {\it Frattini subgroup} of a group $G$ is defined as the intersection of all maximal subgroups of $G$ and is denoted by $\Phi(G)$. The {\it intersection number} of a finite group $G$, denoted by $\iota(G)$, is the minimum number of maximal subgroups of $G$ whose intersection is equal to $\Phi(G)$. By $D_{2n}$, we mean the dihedral group of order $2n$.

Now, we state a few standard group theoretic results which we will be using throughout the paper.

\begin{theorem}[Miller and Moreno, 1903 (\cite{miller-moreno},\cite{minimal-non-cyclic-paper},Theorem 1)]\label{minimal-non-cyclic}
	A group $G$ is a minimal non-cyclic group if and only if $G$ is isomorphic to one of the following groups:
	\begin{enumerate}
		\item $\mathbb{Z}_p\times \mathbb{Z}_p$.
		\item The quaternion group $Q_8$ of order $8$
		\item $\langle a,b| a^p=b^{q^m}=1,b^{-1}ab=a^r \rangle$, where $p$ and $q$ are distinct primes and $r\not\equiv 1~(mod~p), r^q\equiv 1~(mod~p)$. 		
	\end{enumerate}	
\end{theorem}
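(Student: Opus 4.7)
The plan is to treat the two directions of the biconditional separately. The ``if'' direction is a direct verification: for each of the three listed groups, one enumerates the proper subgroups and checks that each is cyclic. For $\mathbb{Z}_p \times \mathbb{Z}_p$ every non-trivial proper subgroup has order $p$. For $Q_8$ the proper subgroups are $\{1\}$, $\{\pm 1\}$, and the three cyclic subgroups of order $4$. For the group in (3), of order $pq^m$, a Sylow analysis combined with the presentation shows every proper subgroup lies either in $\langle a\rangle$ or in a conjugate of $\langle b\rangle$, both of which are cyclic.

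For the ``only if'' direction, suppose $G$ is minimal non-cyclic. I would split into two cases depending on whether $G$ is a $p$-group. In Case~1 ($G$ a $p$-group), since $G$ is non-cyclic, Burnside's basis theorem gives $d(G)\ge 2$; since every proper subgroup, and in particular every maximal one, is cyclic, equality holds, so $G/\Phi(G)\cong\mathbb{Z}_p\times\mathbb{Z}_p$. If $|G|=p^2$, then $G\cong\mathbb{Z}_p\times\mathbb{Z}_p$ directly. Otherwise, I would argue that $G$ contains a unique subgroup of order $p$: two distinct order-$p$ subgroups would generate a subgroup isomorphic to $\mathbb{Z}_p\times\mathbb{Z}_p$, which is proper (as $|G|>p^2$) and non-cyclic, contradicting the hypothesis. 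Then I invoke the classical theorem that a finite $p$-group with a unique subgroup of order $p$ is cyclic or generalized quaternion; among the generalized quaternion groups only $Q_8$ has all proper subgroups cyclic (since $Q_{2^n}$ contains $Q_{2^{n-1}}$ as a proper non-cyclic subgroup for $n\ge 4$).

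In Case~2 ($|\pi(G)|\ge 2$), every Sylow subgroup is proper, hence cyclic, making $G$ a Z-group. I would first rule out $|\pi(G)|\ge 3$: if three distinct primes divided $|G|$, then by taking appropriate elements of two prime orders one constructs a proper subgroup whose enforced cyclicity chains back to cyclicity of $G$ itself, a contradiction. So $\pi(G)=\{p,q\}$ with cyclic Sylow subgroups $P$ and $Q$. If both were normal, then $G\cong P\times Q$ would be cyclic, so without loss of generality $P\triangleleft G$ and $Q$ acts non-trivially on $P$ by conjugation. Minimality then forces $|P|=p$: otherwise the unique subgroup of index $p$ inside the cyclic group $P$, together with $Q$, would assemble into a proper non-cyclic subgroup of $G$. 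The induced action of $Q$ on $P$ is by an automorphism of order dividing $q$, which together with the cyclic structure of $P=\langle a\rangle$ and $Q=\langle b\rangle$ yields precisely the presentation $a^p=b^{q^m}=1$, $b^{-1}ab=a^r$ with $r\not\equiv 1\pmod p$ and $r^q\equiv 1\pmod p$, as in (3).

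The main obstacle is the delicate use of minimality in Case~2 to force $|P|=p$ and to pin down the exact conjugation relation; one must rule out configurations where $P$ is larger or where the action has higher order than $q$ by producing, in each case, a proper non-cyclic subgroup from proper powers of $a$ combined with $b$ or its powers. The classical $p$-group result used in Case~1 (unique subgroup of order $p$ forces cyclic or generalized quaternion) is invoked as a black box but is itself nontrivial.
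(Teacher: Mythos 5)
The paper does not actually prove this statement---it is imported as a classical result with citations to Miller--Moreno and Shi--Zhang---so there is no internal proof to compare against; your outline follows the standard classical route ($p$-group case via a unique subgroup of order $p$ plus the cyclic-or-generalized-quaternion theorem; mixed case via a normal Sylow subgroup with cyclic complement). The most serious gap is in your Case~2: the step ``if both were normal then $G\cong P\times Q$ is cyclic, so without loss of generality $P\triangleleft G$'' is a non sequitur, since the negation of ``both Sylow subgroups are normal'' does not yield a normal Sylow subgroup at all, and nothing in your sketch produces one. You need a genuine input here, e.g.\ Burnside's normal $p$-complement theorem applied to the cyclic Sylow subgroup for the smallest prime of $\pi(G)$ (its normalizer equals its centralizer, so the other Sylow subgroup is a normal complement), or the solvability/metacyclic structure of groups all of whose Sylow subgroups are cyclic. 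Similarly, your exclusion of $|\pi(G)|\ge 3$ is only a placeholder; a concrete mechanism would be: $G$ is solvable, so for any two primes $p,q\in\pi(G)$ the Hall $\{p,q\}$-subgroup is proper, hence cyclic, so Sylow subgroups for distinct primes centralize each other; then $G$ is the direct product of its cyclic Sylow subgroups and is cyclic, a contradiction.

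Two smaller inaccuracies are worth flagging. In Case~1, two distinct subgroups of order $p$ need not generate $\mathbb{Z}_p\times\mathbb{Z}_p$ (they may not commute); either take one of them inside $Z(G)$, or argue that $\langle A,B\rangle$, if proper, is cyclic and cannot contain two distinct subgroups of order $p$, while if $\langle A,B\rangle=G$ one compares with the Frattini quotient. In the ``if'' direction for family (3) with $m\ge 2$, it is false that every proper subgroup lies in $\langle a\rangle$ or in a conjugate of $\langle b\rangle$: the subgroup $\langle a,b^q\rangle$ has order $pq^{m-1}$ and is neither. The correct verification uses that $\langle a\rangle$ is the unique (normal) Sylow $p$-subgroup and that $b^q$ is central (because $r^q\equiv 1 \pmod p$), so a proper subgroup of order divisible by $p$ is $\langle a\rangle$ times a central $q$-subgroup, hence cyclic, while the remaining proper subgroups are $q$-groups inside conjugates of $\langle b\rangle$. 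With these repairs---and granting the quoted black boxes (unique subgroup of order $p$ forces cyclic or generalized quaternion; only $Q_8$ survives among generalized quaternion groups; the kernel of $\mathrm{Aut}(\mathbb{Z}_{p^a})\to\mathrm{Aut}(\mathbb{Z}_{p^{a-1}})$ has order $p$, which drives your $|P|=p$ and action-of-order-$q$ steps)---the skeleton does go through.
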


\begin{proposition}\label{preli-result}
	Let $G$ be a finite group.
	\begin{enumerate}
		\item If $G$ has a unique maximal subgroup, then $G$ is cyclic $p$-group.
		\item If $G$ has exactly two maximal subgroups, then $G$ is cyclic and $|G|=p^aq^b$, where $p,q$ are distinct primes.
		\item $G$ is nilpotent if and only if all maximal subgroups of $G$ are normal in $G$.
		\item $|G/\Phi(G)|$ is divisible by all primes $p$ dividing $|G|$.
	\end{enumerate}
\end{proposition}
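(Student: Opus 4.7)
My plan is to treat the four items in turn, relying on standard reductions to cyclic groups for (1) and (2) and on Frattini's argument for (3) and (4). For (1), the key observation is that every proper subgroup of a finite group sits inside some maximal subgroup; so if $M$ is the unique maximal subgroup, any element $g \in G \setminus M$ satisfies $\langle g \rangle = G$, making $G$ cyclic. The maximal subgroups of $\mathbb{Z}_n$ correspond bijectively to the prime divisors of $n$ (each has prime index), so uniqueness forces $n$ to be a prime power. For (2), the only extra ingredient is the classical fact that a group is never the union of two proper subgroups: this yields $g \in G \setminus (M_1 \cup M_2)$, which must generate $G$ by the argument above, and the same bijection then shows $|G|$ has exactly two distinct prime factors, so $|G| = p^a q^b$.

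For (3), I would use the ``normalizers grow'' characterization of nilpotent groups for the forward direction: $H < N_G(H)$ for every proper subgroup $H$, so applied to a maximal subgroup $M$ this gives $N_G(M) = G$, whence $M \triangleleft G$. For the converse, I would show that every Sylow subgroup of $G$ is normal, which is equivalent to nilpotency. If a Sylow $p$-subgroup $P$ were not normal, then $N_G(P)$ would lie in some maximal subgroup $M$; since $M \triangleleft G$ by hypothesis and $P$ remains a Sylow $p$-subgroup of $M$, Frattini's argument yields $G = M \cdot N_G(P) = M$, a contradiction.

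Part (4) is the subtlest. The plan is to show that for each $p \in \pi(G)$ some maximal subgroup has index divisible by $p$; since $\Phi(G)$ is contained in every maximal subgroup, such an index divides $|G/\Phi(G)|$, giving the conclusion. Arguing by contradiction, suppose every maximal subgroup contains a fixed Sylow $p$-subgroup $P$, so $P \leq \Phi(G)$. Since $\Phi(G) \triangleleft G$, Frattini's argument gives $G = \Phi(G) \cdot N_G(P)$, and the non-generator property of $\Phi(G)$ upgrades this to $G = N_G(P)$, i.e., $P \triangleleft G$. Now $\gcd(|P|, |G/P|) = 1$, so the Schur--Zassenhaus theorem provides a complement $H$ with $G = PH$ and $P \cap H = 1$. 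Any maximal subgroup $M$ containing $H$ cannot contain $P$, for otherwise $M \supseteq PH = G$; but this contradicts $P \leq \Phi(G) \leq M$. I expect this final step to be the main obstacle -- in particular, bypassing the solvability hypothesis to extract the complement $H$ is exactly where Schur--Zassenhaus (rather than a weaker statement) is needed.
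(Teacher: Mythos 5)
Your proofs are correct. Note that the paper gives no proof of this proposition: it is stated explicitly as a list of standard group-theoretic facts (with the textbook references), so there is no argument of the authors' to compare against; your arguments are the standard ones. For (1) and (2) you use that an element lying outside every maximal subgroup generates $G$, together with the fact that a group is never the union of two proper subgroups, and then read off the prime divisors from the maximal subgroups of a cyclic group; for (3) you use the normalizer condition in one direction and the Frattini argument in the other --- all correct. For (4), observe that your closing argument in fact shows directly that a Sylow $p$-subgroup $P$ of $G$ can never be contained in $\Phi(G)$, which is precisely equivalent to $p$ dividing $|G/\Phi(G)|$; this makes the preliminary reduction to ``some maximal subgroup has index divisible by $p$'' superfluous and also repairs the small logical slip there (the negation of that auxiliary claim only says that every maximal subgroup contains \emph{some} Sylow $p$-subgroup, not a fixed one, whereas the negation of (4) itself does hand you a fixed $P\le \Phi(G)$, which is all your Frattini-plus-non-generator step needs). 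The concern you raise about Schur--Zassenhaus is not an obstacle: you only need existence of a complement to the normal subgroup $P$, which holds unconditionally, and in any case $P$ is a $p$-group, hence solvable, so even the restricted classical form applies.
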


\subsection{Previous Works}
In \cite{akbari}, authors proved various results on $\Gamma(G)$. We recall a few of them, which will be used in this paper. 

\begin{theorem}[\cite{akbari}, Theorem 2.2]
	Let $G$ be a group. If $\delta(\Gamma(G))\geq 1$, then $diam(\Gamma(G)) \leq 3$.
\end{theorem}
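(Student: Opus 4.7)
The plan rests on the observation that whenever $H$ has a neighbor $H'$ in $\Gamma(G)$, $H$ is adjacent to every maximal subgroup $M$ containing $H'$: indeed $HM\supseteq HH'=G$ forces $HM=G$. Maximal subgroups therefore serve as convenient hubs in $\Gamma(G)$, and every non-isolated vertex has a maximal subgroup as a neighbor.

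Fix two distinct vertices $H, K$ with $HK\ne G$ and pick neighbors $H'\sim H$, $K'\sim K$. The easy sub-cases $H'=K'$, $HK'=G$, $H'K=G$, $H'K'=G$ directly yield paths of length $2$ or $3$. If $\langle H', K'\rangle$ is a proper subgroup of $G$, any maximal subgroup $M$ containing it is a common neighbor of both $H$ and $K$ by the preliminary observation, so $d(H,K)\le 2$. Otherwise I would examine $\langle H, K'\rangle$: if it is proper, a maximal subgroup $M$ containing it satisfies $K\sim M$ (applying the observation with $K'\le M$) and $H'\sim M$ (since $H\le M$ forces $MH'\supseteq HH'=G$), producing the length-$3$ path $H-H'-M-K$, provided $M\ne H$; but $M=H$ would force $H$ maximal with $K'\le H$, whence $KH\supseteq KK'=G$ contradicts $HK\ne G$. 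A symmetric argument on $\langle H', K\rangle$ disposes of another sub-case.

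The main obstacle I foresee is the residual configuration in which $\langle H', K'\rangle=\langle H, K'\rangle=\langle H', K\rangle=G$ simultaneously: no maximal subgroup jointly contains any of the natural pairs, and the hub technique fails. Resolving this would require either varying the choice of $H'$ or $K'$ among all available neighbors, or combining $HH'=KK'=G$ with the failure of every cross product $HK, HK', H'K, H'K'$ to equal $G$ into a coset-counting contradiction that rules out this configuration altogether whenever $\delta(\Gamma(G))\ge 1$.
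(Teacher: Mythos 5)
The paper does not actually prove this statement: it is quoted verbatim from \cite{akbari} (Theorem 2.2) as background, so there is no internal proof to compare against, and your argument has to stand on its own. It does not, and you say so yourself. The positive parts are fine for finite groups: the observation that a non-isolated vertex $H$ is adjacent to every maximal subgroup $M\supseteq H'$ (since $HM\supseteq HH'=G$), and the sub-cases where one of $HK'$, $H'K$, $H'K'$ equals $G$ or where one of $\langle H',K'\rangle$, $\langle H,K'\rangle$, $\langle H',K\rangle$ is proper, are all handled correctly, including the checks that the hub $M$ is distinct from the endpoints. But the proof collapses exactly where you flag it: the residual configuration $HH'=KK'=G$, $HK=HK'=H'K=H'K'\neq G$, with $\langle H,K'\rangle=\langle H',K\rangle=\langle H',K'\rangle=G$. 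Nothing you write rules this configuration out or produces a path of length at most $3$ through other vertices, and the two exit strategies you mention (re-choosing $H'$, $K'$ among other neighbours, or a coset-counting contradiction) are only named, not carried out. Since subgroup products need not be subgroups, there is no formal obstruction to all four products being proper while all three generated subgroups are the whole group, so the missing case is not vacuous by any argument you have given; the theorem's conclusion for such a pair $H,K$ is therefore unproven. This is a genuine gap, not a cosmetic one: the whole difficulty of the diameter bound lives in precisely this case, and the hub-through-a-maximal-subgroup technique, as set up, has no leverage there.

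Two smaller points. First, the statement (as in \cite{akbari}) is for an arbitrary group $G$, while your argument needs every proper subgroup to be contained in a maximal subgroup; that is automatic for the finite groups considered in this paper but not in general, so you should either restrict to finite $G$ or justify the existence of the maximal subgroups you use. Second, when you do close the main gap, make sure the replacement argument still only uses the hypothesis $\delta(\Gamma(G))\geq 1$ and not the structural characterization of such groups (supersolvable with elementary abelian Sylow subgroups), since that characterization is a separate, later theorem of \cite{akbari} and would make the reasoning circular if it itself relies on connectivity.
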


\begin{theorem}[\cite{akbari}, Theorem 2.4]
Let $G$ be a finite group with at least two proper non-trivial subgroups. Then the following are equivalent:\begin{enumerate}
	\item $\Gamma(G)$ is connected.
	\item $\delta(\Gamma(G))\geq 1$.
	\item $G$ is supersolvable and its Sylow subgroups are all elementary abelian.
	\item $G$ is isomorphic to a subgroup of a direct product of groups of squarefree order.
\end{enumerate}
\end{theorem}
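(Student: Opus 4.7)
My plan is to prove the equivalences in three clusters---$(1)\Leftrightarrow(2)$, $(3)\Leftrightarrow(4)$, and $(2)\Leftrightarrow(3)$---with the last being where the substantive work lies. The first is essentially free: $(1)\Rightarrow(2)$ because any connected graph on at least two vertices has minimum degree at least $1$, and $(2)\Rightarrow(1)$ is a direct application of Theorem 2.2, whose diameter bound forces connectedness.

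For $(3)\Leftrightarrow(4)$ I would rely on standard structure theory. Every group of squarefree order is supersolvable with all Sylow subgroups cyclic of prime order (hence elementary abelian), and both properties are preserved under direct products and passage to subgroups; this gives $(4)\Rightarrow(3)$. For $(3)\Rightarrow(4)$, I would embed $G$ as a subdirect product of those quotients $G/N$ of squarefree order: the elementary abelian hypothesis on each Sylow subgroup allows one to cut each Sylow down through hyperplanes, and supersolvability provides enough normal subgroups realising these cuts so that $\bigcap N = 1$. The implication $(3)\Rightarrow(2)$ can then be obtained either directly---for a non-trivial proper $H$ take a maximal subgroup $M \supseteq H$ of prime index $p$, use the elementary abelian Sylow $p$-hypothesis to produce an order-$p$ subgroup $K \not\subseteq M$, and assemble the supplement prime by prime---or by routing through $(4)$ and invoking complementation inside the ambient squarefree-order product.

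The main obstacle is $(2)\Rightarrow(3)$, which I would prove by contrapositive. The cornerstone observation is that any non-trivial subgroup of $\Phi(G)$ is isolated in $\Gamma(G)$, because it lies inside every maximal subgroup and hence cannot satisfy $HK=G$ for any proper $K$; in particular, $\delta(\Gamma(G))\geq 1$ forces $\Phi(G)=1$. This alone is not sufficient---witness $S_4$, where $\Phi(S_4)=1$ yet $\langle(1\,2)(3\,4)\rangle$ is isolated due to the non-elementary abelian Sylow $2$-subgroup $D_8$. My plan is therefore to stratify the remaining cases by analysing individual Sylow subgroups and chief factors of $G$: in each configuration failing $(3)$, locate a subgroup whose every potential supplement is forced to share a common maximal subgroup with it, and verify isolation explicitly. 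Establishing this absence of supplements---rather than the easier generic failure---is the delicate technical step.
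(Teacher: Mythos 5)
Your proposal gets the routine parts right: $(1)\Leftrightarrow(2)$ via the quoted diameter bound, and $(4)\Rightarrow(3)$ by closure of supersolvability and of elementary abelian Sylow subgroups under subgroups and direct products. But note that the paper you are working against does not prove this statement at all --- it is quoted from Theorem 2.4 of \cite{akbari}, whose proof ultimately rests on P.~Hall's 1937 characterization of complemented groups (a finite group is a subgroup of a direct product of groups of squarefree order if and only if it is supersolvable with elementary abelian Sylow subgroups, if and only if every subgroup is complemented). Your plan never engages this engine, and the implications where it is needed are left as intentions rather than arguments. For $(2)\Rightarrow(3)$ your contrapositive strategy --- ``stratify the configurations failing (3) and verify isolation explicitly'' --- names no configurations and no mechanism. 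What actually has to be shown is (a) if some Sylow $p$-subgroup $P$ has $\Phi(P)\neq\{e\}$, then a non-trivial subgroup of $\Phi(P)$ admits no proper supplement (this already needs a factorization lemma such as: $G=HK$ forces $P=H_pK_p$ for suitable Sylow $p$-subgroups $H_p\leq H$, $K_p\leq K$), and (b) $\delta(\Gamma(G))\geq 1$ forces supersolvability --- this is the genuinely deep half, obtained in the literature from Hall's theorem, and your proposal offers no route to it; asserting that in each bad case one can ``locate'' an isolated subgroup assumes exactly what must be proved.

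There is also a concrete flaw in your direct sketch of $(3)\Rightarrow(2)$: taking a maximal subgroup $M\supseteq H$ of prime index $p$ and an order-$p$ subgroup $K\not\subseteq M$ gives $MK=G$, i.e.\ a supplement of $M$, not of $H$, and ``assembling the supplement prime by prime'' does not obviously work because a product of prime-order subgroups need not be a subgroup. The standard repair is induction on a minimal normal subgroup $N$ (of prime order, by supersolvability): if $N\not\subseteq H$ either $HN=G$ or one lifts a supplement from $G/N$; if $N\subseteq H$ one either lifts from $G/N$ or, when $H=N$, invokes Gasch\"utz's complement theorem (an abelian normal subgroup complemented in its Sylow subgroup is complemented in $G$), the elementary abelian hypothesis supplying the complement inside the Sylow subgroup. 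The same Gasch\"utz argument is what shows $\Phi(G)=\{e\}$ under (3) and is the missing justification in your $(3)\Rightarrow(4)$ step, where ``cutting each Sylow down through hyperplanes'' is not explained: one must produce normal subgroups of $G$ with squarefree quotients whose intersection is trivial, which your sketch does not do. As it stands, the proposal is a correct outline of the easy equivalences wrapped around an unproved core.
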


\begin{corollary}\cite{akbari}
	Let $G$ be a nilpotent group. Then $\Gamma(G)$ is connected if and only if
	$\Phi(G) = \{e\}$ or $G \cong \mathbb{Z}_{p^2}$, for some prime number $p$.
\end{corollary}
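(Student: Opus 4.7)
The plan is to reduce the statement to the preceding Theorem (connectedness iff $G$ is supersolvable with all Sylow subgroups elementary abelian) by (i) handling the small cases where $G$ has fewer than two proper nontrivial subgroups separately, and (ii) translating the ``elementary abelian Sylow subgroups'' condition into ``$\Phi(G)=\{e\}$'' using nilpotency.

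First I would dispose of the edge cases. If $G$ has no proper nontrivial subgroup, then $|G|$ is prime, so $G\cong\mathbb{Z}_p$; then $\Gamma(G)$ is (vacuously) connected and $\Phi(G)=\{e\}$, so the corollary holds. If $G$ has exactly one proper nontrivial subgroup $N$, pick $g\in G\setminus N$; since $\langle g\rangle$ is a nontrivial subgroup not equal to $N$, it must be $G$, so $G$ is cyclic. A cyclic group has exactly one proper nontrivial subgroup iff $|G|$ has exactly one proper divisor greater than $1$, iff $|G|=p^2$. In this case $\Gamma(G)$ has only one vertex and is trivially connected; here $G\cong\mathbb{Z}_{p^2}$ as required. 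Thus for the remainder I may assume $G$ has at least two proper nontrivial subgroups, so the preceding Theorem applies.

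Next I would establish the central equivalence: for a nilpotent group,
\[
\Phi(G)=\{e\} \;\Longleftrightarrow\; \text{every Sylow subgroup of }G\text{ is elementary abelian.}
\]
Writing $G=P_1\times P_2\times\cdots\times P_k$ as the internal direct product of its Sylow subgroups (using nilpotency), one checks $\Phi(G)=\Phi(P_1)\times\cdots\times\Phi(P_k)$ from the definition (maximal subgroups of a direct product of coprime-order pieces are $P_1\times\cdots\times M_i\times\cdots\times P_k$ for $M_i$ maximal in $P_i$). Then for each $p$-group $P_i$, the standard fact that $P_i/\Phi(P_i)$ is elementary abelian and that $\Phi(P_i)=\{e\}$ iff $P_i$ itself is elementary abelian finishes the equivalence.

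Putting it together: since $G$ is nilpotent (hence supersolvable), the previous Theorem says $\Gamma(G)$ is connected iff every Sylow subgroup of $G$ is elementary abelian, which by the equivalence above is exactly $\Phi(G)=\{e\}$. Combined with the $\mathbb{Z}_{p^2}$ case from the edge-case analysis, this yields the corollary. The main (and only nontrivial) obstacle is the equivalence displayed above, and that follows cleanly from the direct-product decomposition of nilpotent groups together with the standard characterization of $\Phi(P)$ for $p$-groups; everything else is bookkeeping on the small cases.
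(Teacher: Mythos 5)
Your proposal is correct: the edge cases are handled properly, and the equivalence ``$\Phi(G)=\{e\}$ iff all Sylow subgroups are elementary abelian'' for finite nilpotent groups (via $\Phi(G)=\Phi(P_1)\times\cdots\times\Phi(P_k)$ and the Burnside basis theorem) is exactly the bridge needed to the preceding theorem. The paper itself states this corollary as a quoted result from \cite{akbari} without proof, and your derivation is essentially the standard one used there, so there is nothing further to compare.
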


\begin{theorem}\label{universal-akbari}[\cite{akbari}, Theorem 3.5]
	Let $G$ be a nilpotent group. Then the following are equivalent.
	\begin{enumerate}
		\item There exists a vertex adjacent to all other vertices of $\Gamma(G)$.
		\item $G \cong \mathbb{Z}_p\times \mathbb{Z}_q$, where $p$ and $q$ are (not necessarily distinct) primes.
		\item $\Gamma(G)$ is complete.
	\end{enumerate}
\end{theorem}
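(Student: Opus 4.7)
The plan is to prove the cycle $(3)\Rightarrow(1)\Rightarrow(2)\Rightarrow(3)$. The implication $(3)\Rightarrow(1)$ is immediate. For $(2)\Rightarrow(3)$, I would verify both cases directly: when $p\neq q$, the group $G=\mathbb{Z}_p\times\mathbb{Z}_q$ has exactly two non-trivial proper subgroups (its two Sylow subgroups), whose product is obviously $G$; when $p=q$, the $p+1$ subgroups of order $p$ of $\mathbb{Z}_p\times\mathbb{Z}_p$ intersect pairwise trivially, so $|HK|=|H||K|/|H\cap K|=p^{2}=|G|$ for any two of them, giving a complete graph.

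The substance is in $(1)\Rightarrow(2)$. Suppose $H$ is a vertex of $\Gamma(G)$ adjacent to every other vertex. I would extract two structural constraints on $H$. First, $H$ must be a maximal subgroup of $G$: otherwise $H$ is properly contained in some maximal subgroup $M$, and $M$ is then a distinct vertex with $HM=M\neq G$, contradicting universality. Second, $H$ cannot contain a non-trivial proper subgroup $K$ of its own, because any such $K$ would be a non-trivial proper subgroup of $G$ and hence a vertex, yet $HK=H\neq G$. Together these force $|H|$ to be prime, say $|H|=p$.

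Now I would invoke the nilpotence of $G$: by Proposition 1.3(3) the maximal subgroup $H$ is normal in $G$, so $G/H$ has no non-trivial proper subgroup, whence $|G/H|$ is a prime $q$ and $|G|=pq$. If $p\neq q$, nilpotence realises $G$ as the internal direct product of its Sylow subgroups, each cyclic of prime order, giving $G\cong\mathbb{Z}_p\times\mathbb{Z}_q$. If $p=q$, then $|G|=p^{2}$ is abelian, so $G$ is either $\mathbb{Z}_{p^{2}}$ or $\mathbb{Z}_p\times\mathbb{Z}_p$; since $\mathbb{Z}_{p^{2}}$ has a unique non-trivial proper subgroup, its $\Gamma$-graph has a single vertex with no neighbour and so cannot host a universal vertex, leaving $G\cong\mathbb{Z}_p\times\mathbb{Z}_p$.

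The main obstacle is not computational but interpretational: ruling out $\mathbb{Z}_{p^{2}}$ cleanly, which requires reading ``adjacent to all other vertices'' as presupposing that another vertex exists. Once this is agreed, the entire argument reduces to the two structural observations about $H$ (maximality and having prime order) plus standard facts about nilpotent groups of order $pq$.
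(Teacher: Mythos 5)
Your argument is correct. Note, however, that this theorem is quoted in the paper as previous work (Akbari et al., Theorem 3.5) and is not proved there, so the natural comparison is with the machinery the paper itself develops for closely related statements. Your $(2)\Rightarrow(3)$ step is essentially identical to the first half of the paper's Theorem~\ref{complete}. For $(1)\Rightarrow(2)$ your route differs from, and is leaner than, the paper's general characterization in Theorem~\ref{Gamma(G)-universal}: there the authors only extract that a universal vertex is both maximal and minimal (hence of prime order $p$) and then, lacking nilpotency, must run a case analysis over $p\mid m$ versus $p\nmid m$, including Sylow containment, the four groups of order $p^3$, and Cauchy's theorem. You instead exploit nilpotency at once: the universal vertex $H$ is maximal, hence normal by Proposition~\ref{preli-result}(3), so $G/H$ has prime order and $|G|=pq$ immediately, after which the identification of $G$ with $\mathbb{Z}_p\times\mathbb{Z}_q$ (or the exclusion of $\mathbb{Z}_{p^2}$) is routine. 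This buys a much shorter proof, at the cost of applying only to nilpotent groups, whereas the paper's longer argument yields the stronger Theorem~\ref{Gamma(G)-universal} for arbitrary finite groups. One caveat you rightly flag: as stated, $\mathbb{Z}_{p^2}$ makes $(1)$ and $(3)$ vacuously true (its graph is a single isolated vertex), so the equivalence needs the implicit hypothesis that $G$ has at least two non-trivial proper subgroups; the paper guards against exactly this in Theorem~\ref{complete} by requiring the complete graph to have more than one vertex, and the original source imposes the analogous hypothesis, so your interpretational resolution matches the intended reading.
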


\section{Isolated Vertices in $\Gamma(G)$}
In \cite{akbari}, authors mainly focussed on graphs $\Gamma(G)$ with $\delta(\Gamma(G))\geq 1$, i.e., graphs without isolated vertices. The only discussion on isolated vertices appear in Remark 2.9 \cite{akbari}, where they characterized isolated vertices in the case when $G$ is abelian.

We start with some examples of $\Gamma(G)$, both connected and disconnected.
\begin{example} 
	Consider the Klein-$4$ group, $K_4$. Then the set of all proper nontrivial subgroups is  $S=\{H_1=\{e,a\}, H_2=\{e,b\}, H_3=\{e, ab\}\}$ and the corresponding $\Gamma(K_4)$ is given in Figure \ref{examplefigure}(A). Next, consider  the group $S_3$. Then the set of all proper nontrivial subgroups is  $S=\{ H_1=\{e,(1 2)\}, H_2= \{e,(1 3)\} , H_3=\{e,(2 3)\}, H_4= \{e,(1 2 3), (1 3 2)\} \}$ and the corresponding $\Gamma(S_3)$ is given in Figure \ref{examplefigure}(B). 
	\begin{figure}[!ht]
		\begin{center}
			\includegraphics[scale=0.35]{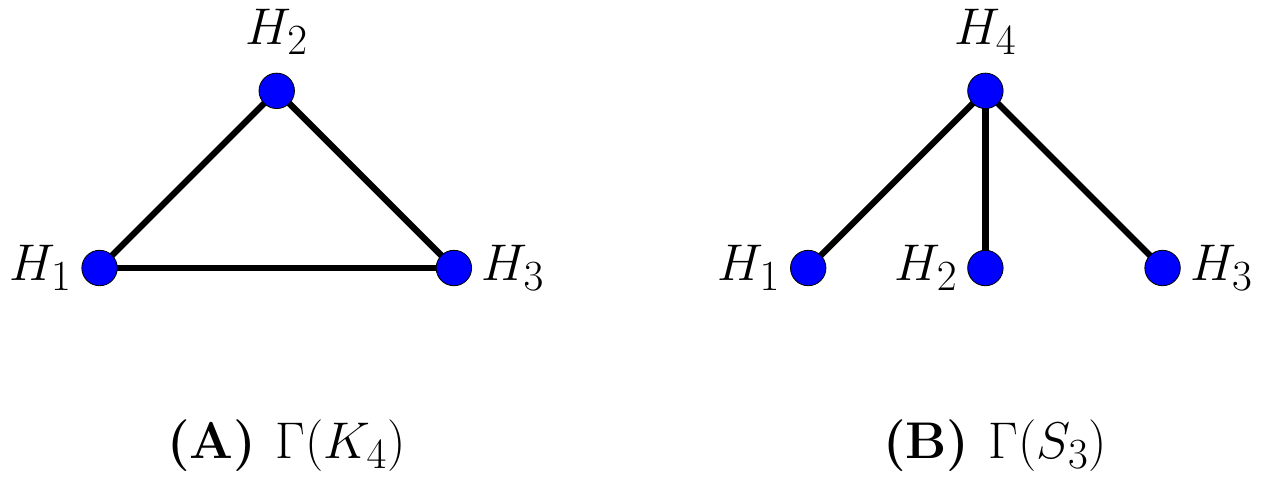}
		\end{center}
		\caption{Examples of $\Gamma(G)$ (Connected Examples)}
		\label{examplefigure}
	\end{figure}	
\end{example}

\begin{example} 
	Consider the Quaternion group, $Q_8=\langle a,b: a^4=e, a^2=b^2,ba=a^3b \rangle$. Then the set of all proper nontrivial subgroups is  $S=\{H_1=\langle a^2 \rangle, H_2=\langle a \rangle, H_3=\langle ab \rangle, H_4=\langle b \rangle \}$ and the corresponding $\Gamma(Q_8)$ is given in Figure \ref{examplefigure2}(A). Next, consider the Dihedral group $D_8=\langle a,b: a^4=e, b^2=e,ba=a^3b \rangle$. Then the set of all proper nontrivial subgroups is  $S=\{ H_1=\langle a^2 \rangle, H_2=\langle b \rangle , H_3=\langle ab \rangle, H_4=\langle a^2b \rangle , H_5=\langle a^3b \rangle, T_1=\langle a \rangle, T_2=\{e,a^2,b,a^2b\},T_3=\{e,ab,a^2,a^3b\} \}$ and the corresponding $\Gamma(D_8)$ is given in Figure \ref{examplefigure2}(B). 
	\begin{figure}[!ht]
		\begin{center}
			\includegraphics[scale=0.35]{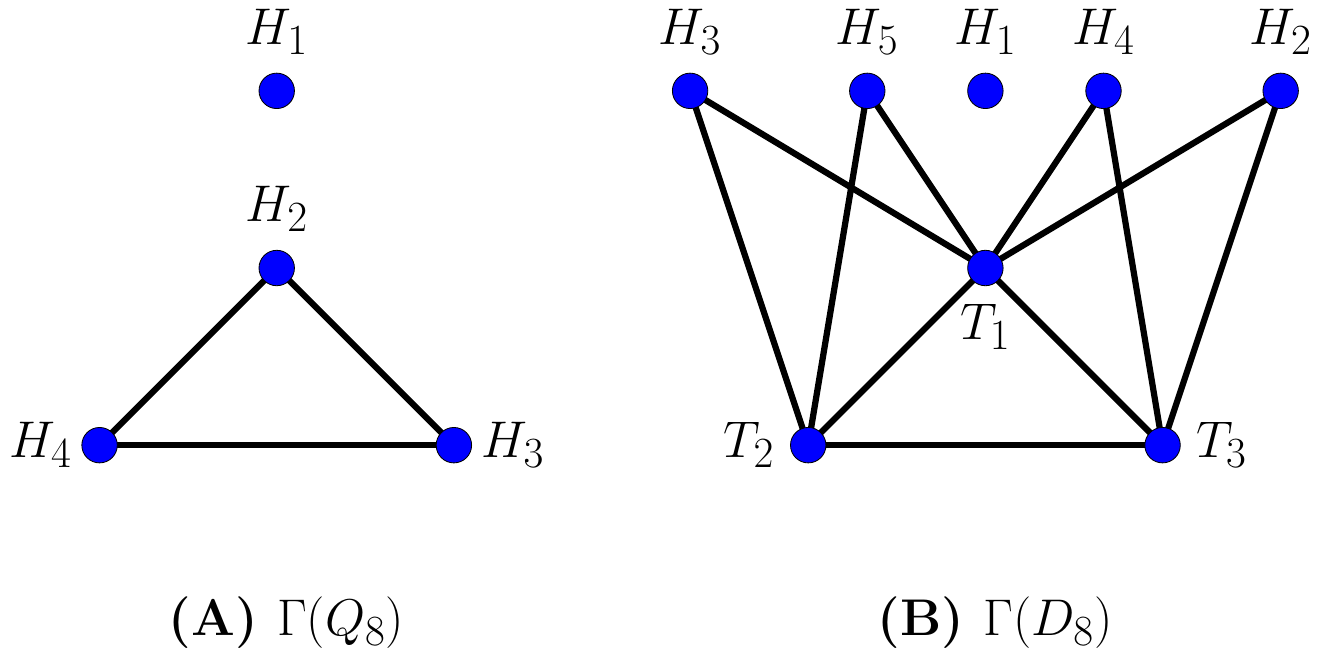}
		\end{center}
		\caption{Examples of $\Gamma(G)$ (Disconnected Examples)}
		\label{examplefigure2}
	\end{figure}	
\end{example}	

\begin{theorem}\label{isolated-vertex-theorem}
	Let $G$ be a finite group. If $\{e\}\subsetneq H\subseteq \Phi(G)$, then $H$ is an isolated vertex in $\Gamma(G)$. Conversely, if $G$ is nilpotent and $H$ is an isolated vertex in $\Gamma(G)$, then $H\subseteq \Phi(G)$.
\end{theorem}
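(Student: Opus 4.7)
My plan is to prove the two implications separately. For the forward direction, I will use the standard characterization of $\Phi(G)$ as the set of non-generators of $G$; for the converse, I will exploit the fact, recorded in Proposition 1.2(3), that in a nilpotent group every maximal subgroup is normal.

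\textbf{Forward direction.} First I would verify that $H$ is actually a vertex of $\Gamma(G)$: since $G$ is finite it has at least one maximal subgroup, and $\Phi(G)$ lies inside each such maximal subgroup, so $\{e\} \subsetneq H \subseteq \Phi(G) \subsetneq G$. Next, I would argue by contradiction: suppose $H$ is adjacent to some proper subgroup $K$ of $G$, so that $HK=G$ and in particular $\langle H \cup K\rangle = G$. Enumerating the finitely many elements of $H$ as $h_1,\dots,h_n$ and invoking, one element at a time, the non-generator property of $h_i \in \Phi(G)$ (applied with generating set $K \cup \{h_1,\dots,h_{i-1}\}$), I would strip the $h_i$'s off the generating set to conclude $\langle K \rangle = K = G$, contradicting that $K$ is proper. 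Hence $H$ has no neighbours in $\Gamma(G)$.

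\textbf{Converse.} Now assume $G$ is nilpotent and $H$ is isolated in $\Gamma(G)$; I want to show $H \subseteq M$ for every maximal subgroup $M$ of $G$. Again by contradiction, suppose some maximal subgroup $M$ satisfies $H \not\subseteq M$. By Proposition 1.2(3), $M$ is normal in $G$, so $HM$ is a subgroup of $G$; since $H \not\subseteq M$ we have $HM \supsetneq M$, and the maximality of $M$ forces $HM = G$. Finally, I need $M$ to be a legitimate vertex of $\Gamma(G)$, i.e. proper and non-trivial: properness is automatic, and non-triviality follows because the mere existence of the vertex $H$ rules out $G \cong \mathbb{Z}_p$ (prime order), which is the only case where a maximal subgroup could be $\{e\}$. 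Since also $H \neq M$ (because $H \not\subseteq M$), the pair $(H,M)$ is an edge of $\Gamma(G)$, contradicting that $H$ is isolated.

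The only subtleties are the boundary checks that the subgroups in play are valid vertices of $\Gamma(G)$; the real content in each direction is a single well-known property of $\Phi(G)$ (non-generators for the forward direction, normality of maximal subgroups in nilpotent groups for the converse), so I do not anticipate any serious obstacle.
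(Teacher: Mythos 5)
Your proposal is correct; the converse direction is essentially identical to the paper's proof (take a maximal $M$ with $H\not\subseteq M$, use normality of $M$ from nilpotency to form the subgroup $HM\supsetneq M$, conclude $HM=G$ and contradict isolatedness), with your extra checks that $M$ is a legitimate non-trivial vertex being harmless details the paper leaves implicit. The forward direction, however, runs along a different lemma: the paper simply encloses the alleged neighbour $K$ in a maximal subgroup $M$ and observes $G=HK\subseteq HM=M\neq G$, using nothing beyond $\Phi(G)\subseteq M$; you instead invoke the non-generator characterization of $\Phi(G)$, pass from $HK=G$ to $\langle H\cup K\rangle=G$, and strip the finitely many elements of $H$ off the generating set to force $K=G$. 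Both arguments are valid and standard; the paper's is shorter and more self-contained (the non-generator theorem is itself usually proved by exactly the maximal-subgroup containment the paper uses directly), while yours has the mild advantage of making explicit why the statement extends to infinite groups once every proper subgroup lies in a maximal one (the paper notes this via Zorn's lemma in Remark 2.1). No gaps.
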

\begin{proof}
Let $H$ be a non-trivial proper subgroup of $G$	which is contained in $\Phi(G)$. If possible, let $H\sim K$ in $\Gamma(G)$ for some non-trivial proper subgroup $K$ of $G$. Then there exists a maximal subgroup $M$ of $G$ which contains $K$. Thus, $G=HK\subseteq HM=M\neq G$, a contradiction. Thus $H$ is an isolated vertex in $\Gamma(G)$.

Conversely, let $H$ be an isolated vertex in $\Gamma(G)$. If possible, let $M$ be a maximal subgroup of $G$ which does not contain $H$. As $G$ is nilpotent, by Proposition \ref{preli-result}(3), $M$ is a normal subgroup of $G$ and hence $HM$ is a subgroup of $G$ and $M\subsetneq HM$. Thus, by maximality of $M$, we have $HM=G$, i.e., $H\sim M$ in $\Gamma(G)$, a contradiction. Thus $H$ is contained in every maximal subgroup of $G$, i.e., $H\subseteq \Phi(G)$.
\end{proof}

{\remark \label{isolated-remark} The above theorem proves that if the Frattini subgroup of $G$ is non-trivial, then $\Gamma(G)$ is disconnected. It is to be noted that if $G$ is not nilpotent, triviality of the Frattini subgroup of $G$ does not imply $\Gamma(G)$ is connected. For example, $A_4$ is not nilpotent and $\Phi(A_4)$ is trivial. But $\Gamma(A_4)$ is the disjoint union of a star $K_{1,4}$ and three isolated vertices. This shows that solvability of $G$ is not enough for the converse to hold. We can even say something more: super-solvability is also not enough. Consider the Frobenius group $G=\langle a,b: a^5=b^4=1, ab=ba^2 \rangle$ of order $20$. It is super-solvable, non-nilpotent group with $|\Phi(G)|=1$, but it has five isolated vertices. 

}

\begin{theorem}\label{edgeless}
	Let $G$ be a finite group. If $G$ is a cyclic $p$-group, then $\Gamma(G)$ has no edges. Conversely, if $G$ is a solvable group such that $\Gamma(G)$ has no edges, then $G$ is a cyclic $p$-group.
\end{theorem}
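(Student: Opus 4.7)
The forward direction is essentially a corollary of what has already been established. If $G$ is cyclic of order $p^n$, then by Proposition \ref{preli-result}(1) $G$ has a unique maximal subgroup, which must therefore coincide with $\Phi(G)$. Every non-trivial proper subgroup of $G$ lies inside this maximal subgroup and hence inside $\Phi(G)$, so by Theorem \ref{isolated-vertex-theorem} every such subgroup is isolated in $\Gamma(G)$. Consequently $\Gamma(G)$ has no edges. (When $n=1$ the vertex set is empty and the statement is vacuous.)

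For the converse, I would assume $G$ is solvable with $\Gamma(G)$ edgeless and split according to $|\pi(G)|$. If $|\pi(G)|\geq 2$, pick a prime $p$ dividing $|G|$. Solvability lets me invoke Hall's theorem to obtain a Hall $p'$-subgroup $H$ complementing a Sylow $p$-subgroup $P$, so $P\cap H=\{e\}$ and therefore $PH=G$. Because two distinct primes divide $|G|$, both $P$ and $H$ are non-trivial and proper subgroups of $G$, so $P\sim H$ is an edge of $\Gamma(G)$, contradicting the hypothesis. Hence $|\pi(G)|=1$ and $G$ is a $p$-group.

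It remains to rule out non-cyclic $p$-groups. If $G$ is a non-cyclic $p$-group, Proposition \ref{preli-result}(1) yields two distinct maximal subgroups $M_1,M_2$. Since $G$ is nilpotent, Proposition \ref{preli-result}(3) makes both $M_i$ normal in $G$; then $M_1M_2$ is a subgroup strictly containing $M_1$, and by maximality $M_1M_2=G$, giving the edge $M_1\sim M_2$, a contradiction. Therefore $G$ must be a cyclic $p$-group. The only genuine step of substance is the invocation of Hall's theorem in the multi-prime case: that is precisely where solvability enters the argument, and once we have been reduced to $p$-groups the proof is forced by the normality of maximal subgroups in nilpotent groups.
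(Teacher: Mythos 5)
Your proof is correct and takes essentially the same route as the paper: Hall's theorem for solvable groups forces $G$ to be a $p$-group, and then an edge produced by normality forces cyclicity; the paper merely finishes the $p$-group case with a normal subgroup $N$ of index $p$ and $A=\langle g\rangle$ for $g\notin N$ rather than with two normal maximal subgroups, and handles the forward direction by listing the chain of subgroups of $\mathbb{Z}_{p^k}$ rather than via $\Phi(G)$ and Theorem \ref{isolated-vertex-theorem}. One small slip: Proposition \ref{preli-result}(1) states the implication in the opposite direction (unique maximal subgroup $\Rightarrow$ cyclic $p$-group), so the fact you actually use -- that a cyclic $p$-group has a unique maximal subgroup -- should be justified directly (its subgroup lattice is a chain), which is elementary and does not affect the argument.
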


\begin{proof}
 If $G$ is a cyclic $p$-group, then $G\cong \mathbb{Z}_{p^k}$ and $S=\{\langle p\rangle, \langle p^2\rangle, \cdots, \langle p^{k-1}\rangle \}$ is the set of vertices of $\Gamma(G)$. Clearly, all of the vertices are isolated in $\Gamma(G)$. 
 
For the other  direction, since $G$ is a solvable group, by a Theorem of Hall (See \cite{rotman-book}, Theorem 5.28, pp. 108), $G$ is the Zappa-Szep product of a Sylow $p$-subgroup $H$ and a Hall $p'$-subgroup $K$, i.e., $G=HK$ and $H\cap K=\{e\}$. If $K$ is non-trivial, then we have an edge $(H,K)$ in $\Gamma(G)$. Thus $K$ must be trivial. Hence $G$ is a finite $p$-group of order, say $p^n$. By Sylow's theorem, $G$ has a subgroup $N$ of order $p^{n-1}$ and $N$ is normal in $G$. Let $g \in G \setminus N$ and $A=\langle g \rangle$. If $A$ is a proper subgroup of $G$, then $AN$ is a subgroup of $G$ containing $N$ and $p^{n-1}=|N|<|AN|$, i.e., $|AN|=p^n$, i.e., $AN=G$. Thus we get an edge $(N,A)$ in $\Gamma(G)$, a contradiction. Hence $A=G=\langle g \rangle$, i.e., $G$ is a cyclic $p$-group. 
\end{proof}	

{\remark The solvability of $G$ is required for the converse to hold: If $G=PSL(2,13)$, then $\Gamma(G)$ consists of only isolated vertices.}

In Theorem 2.2 of \cite{akbari}, the authors proved that if $\Gamma(G)$ has no isolated vertices, then it is connected and its diameter is bounded above by $3$. In the next theorem, we discuss about the components of $\Gamma(G)$, if $\Gamma(G)$ has isolated vertices.

\begin{theorem}\label{connected-component}
	Let $G$ be a finite group such that $G$ has a maximal subgroup which is normal in $G$. Then $\Gamma(G)$ is connected apart from some possible isolated vertices. Moreover, the diameter of the component is less than or equal to $4$.
\end{theorem}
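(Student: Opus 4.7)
The plan is to fix a normal maximal subgroup $M$ of $G$ (given by hypothesis) and use it as a universal hub through which every non-isolated vertex of $\Gamma(G)$ can be reached. First I would prove that every non-isolated vertex $H$ lies within distance $2$ of $M$ in $\Gamma(G)$. If $H \not\subseteq M$, then normality of $M$ makes $HM$ a subgroup strictly larger than $M$, and maximality of $M$ forces $HM = G$, so $H \sim M$. If instead $H \subseteq M$, then since $H$ is non-isolated there exists a proper non-trivial $H'$ with $HH' = G$; such an $H'$ cannot be contained in $M$, for otherwise $HH' \subseteq M \neq G$, so by the previous case $H' \sim M$, yielding the path $H \sim H' \sim M$.

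Next I would verify that $M$ itself is non-isolated, unless $\Gamma(G)$ is edgeless (in which case the statement is vacuous). If $M$ were isolated, the same normality-maximality argument forces every proper non-trivial subgroup of $G$ to be contained in $M$; this makes $M$ the unique maximal subgroup of $G$, so by Proposition \ref{preli-result}(1), $G$ is a cyclic $p$-group, and by Theorem \ref{edgeless}, $\Gamma(G)$ has no edges at all. In every other case $M$ is non-isolated, and combining with the first step gives $d(H_1,H_2)\le 4$ for any two non-isolated vertices via the path through $M$, proving connectedness of $\Gamma(G)$ after removing isolated vertices.

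For the nilpotent refinement I would exploit that every maximal subgroup is normal (Proposition \ref{preli-result}(3)). Given any non-isolated vertex $H$, Theorem \ref{isolated-vertex-theorem} gives $H \not\subseteq \Phi(G)$, so some maximal subgroup $M_H$ fails to contain $H$, and normality of $M_H$ forces $H M_H = G$, i.e., $H \sim M_H$. For two non-isolated vertices $H_1, H_2$ choose $M_1, M_2$ in this way. If $M_1 = M_2$ the distance is at most $2$. Otherwise $M_1, M_2$ are distinct normal maximal subgroups, so $M_1 M_2$ is a subgroup strictly containing $M_1$ and thus equals $G$, giving $M_1 \sim M_2$; the path $H_1 \sim M_1 \sim M_2 \sim H_2$ then has length at most $3$.

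The main obstacle is the case $H \subseteq M$ in the first step: such an $H$ cannot attach directly to the hub, and the argument depends on invoking non-isolation to produce an intermediate partner $H'$ outside $M$. The other subtlety is verifying that $M$ itself is non-isolated except in the degenerate cyclic $p$-group case where the theorem is vacuous; once those two points are settled, the nilpotent improvement is essentially bookkeeping, using Proposition \ref{preli-result}(3) and Theorem \ref{isolated-vertex-theorem} to replace the generic hub $M$ by a maximal subgroup tailored to each endpoint and then linking the two chosen maximal subgroups via their product.
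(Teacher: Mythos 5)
Your proposal is correct and follows essentially the same route as the paper: use the normal maximal subgroup as a hub adjacent to every subgroup not contained in it, note that any edge must have an endpoint outside the hub, and in the nilpotent case replace the hub by maximal subgroups avoiding $\Phi(G)$-containment to get the diameter-$3$ bound. The only cosmetic difference is how the degenerate case is dispatched (you argue an isolated hub forces a unique maximal subgroup, hence a cyclic $p$-group; the paper simply splits off the cyclic $p$-group case at the start), which does not change the substance.
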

\begin{proof}
	If $G$ is a cyclic $p$-group, then $\Gamma(G)$ is edgeless and hence the result holds. So, we assume that $G$ is not a cyclic $p$-group. Let $H$ be a maximal subgroup of $G$ which is normal in $G$. 
	As $G$ is not a cyclic $p$-group, there exists another maximal subgroup $L$ of $G$. Thus $HL=G$ and $H \sim L$, i.e., $H$ is not an isolated vertex in $\Gamma(G)$. Let $C_1$ be the component of $\Gamma(G)$ which contains $H$. If possible, let there exists another component $C_2$ of $\Gamma(G)$ and $H'\sim K'$ in $C_2$.
	
	If both $H',K' \subseteq H$, then $H'K'\subseteq H \neq G$, which contradicts that $H'\sim K'$. Thus, at least one of $H'$ and $K'$ is not contained in $H$. Let $H'$ is not contained in $H$. Then $H \subsetneq HH'=G$ and hence $H \sim H'$ in $\Gamma(G)$. This contradicts that $H$ and $H'$ are in different component of $\Gamma(G)$. Hence, our assumption is wrong and $\Gamma(G)$ is connected apart from some possible isolated vertices.
	
	Now, we prove the upper bound for the diameter. If $H$ is the only maximal subgroup of $G$, then $G$ is a cyclic $p$-group, and the resulting graph $\Gamma(G)$ is empty. So, we assume that there exist other maximal subgroups of $G$, apart from $H$. Let $A,B$ be two arbitrary vertices of the component. If $A,B\not\subseteq H$, then we have $A\sim H\sim B$, i.e., $d(A,B)\leq 2$.
	
	If $A\not\subseteq H$ and $B \subseteq H$, then as $B$ is not an isolated vertex in the component, there exists a subgroup $B'$ of $G$ such that $B'\sim B$ in $\Gamma(G)$. Clearly $B'$ is contained in some maximal subgroup $M$ of $G$, with $M\neq H$. Thus $B\sim M \sim H \sim A$, i.e., $d(A,B)\leq 3$.
	
	Lastly, let us assume $A,B \subseteq H$. Clearly $A \not\sim B$ in $\Gamma(G)$. As $A$ and $B$ are not isolated vertices in the component, there exist subgroups $A'$ and $B'$ such that $A \sim A'$ and $B \sim B'$ in $\Gamma(G)$. Again, $A'$ and $B'$ are contained in some maximal subgroups $M_A$ and $M_B$ respectively, where $H\neq M_A,M_B$. If $M_A=M_B$, then $A \sim M_A\sim B$, i.e., $d(A,B)=2$. If $M_A\neq M_B$, then we have $A \sim M_A \sim H \sim M_B \sim B$, i.e., $d(A,B)\leq 4$.
\end{proof}	

\begin{corollary}\label{nilpotent-diameter-bound}
Let $G$ be a finite nilpotent group. Then $\Gamma(G)$ is connected apart from some possible isolated vertices and the component has diameter at most $3$.
\end{corollary}
\begin{proof}
As every maximal subgroup in a finite nilpotent group $G$ is normal in $G$, by Theorem \ref{connected-component}, $\Gamma(G)$ is connected apart from some possible isolated vertices. Now, we prove that the diameter of the unique connected component of $\Gamma(G)$ is less than or equal to $3$. If $G$ has a unique maximal subgroup, then, by Lemma \ref{preli-result}(1), $G$ is a cyclic $p$-group and hence $\Gamma(G)$ is edgeless. So, we assume that $G$ has at least two distinct maximal subgroups and we denote the set of all maximal subgroups of $G$ by $\mathcal{M}$. Let $H$ and $K$ be two vertices in the component of $\Gamma(G)$. Then,  $H$ and $K$ are not isolated vertices and by Theorem \ref{isolated-vertex-theorem}, $H,K \not\subseteq \Phi(G),$	i.e., there exists maximal subgroups $M_1$ and $M_2$ in $\mathcal{M}$ such that $H \not\subseteq M_1$ and $K \not\subseteq M_2$. If $M_1=M_2$, then $M_1\subsetneq HM_1=G=KM_1\supsetneq M_1$ (since $G$ is nilpotent and $M_1$ is a maximal subgroup), i.e., we have a path $H\sim M_1\sim K$ in $\Gamma(G)$ and $d(H,K)\leq 2$. Similarly, if $M_1\neq M_2$, then we have a path $H \sim M_1 \sim M_2 \sim K$, i.e., $d(H,K)\leq 3$. Thus the diameter of the component of $\Gamma(G)$ is less than or equal to $3$.
\end{proof}

\begin{corollary}
Let $G$ be a finite solvable group. Then $\Gamma(G)$ is connected apart from some possible isolated vertices and the component has diameter at most $4$.
\end{corollary}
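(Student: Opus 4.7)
The plan is to reduce to Theorem \ref{connected-component}, which guarantees that $\Gamma(G)$ is connected apart from possible isolated vertices whenever $G$ possesses a maximal subgroup that is normal in $G$. So the entire corollary will follow once I exhibit such a maximal normal subgroup for every finite solvable group (with the degenerate cases where $G$ has no non-trivial proper subgroups, such as $G = \{e\}$ or $G \cong \mathbb{Z}_p$, being vacuous since $\Gamma(G)$ is then empty).

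To produce the required maximal normal subgroup, I would work with the commutator subgroup $G'$. Since $G$ is a non-trivial finite solvable group, the derived series terminates at $\{e\}$ in finitely many steps, which forces $G' \subsetneq G$. Consequently, the abelianization $G/G'$ is a non-trivial finite abelian group, and so it admits at least one maximal subgroup $\overline{M}$ (for instance, the kernel of a surjection onto $\mathbb{Z}_p$ for some prime $p$ dividing $|G/G'|$).

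Now lift back: let $\pi \colon G \to G/G'$ be the canonical projection and set $M = \pi^{-1}(\overline{M})$. The correspondence theorem tells us $M$ is a maximal subgroup of $G$, and since $M \supseteq G'$, the subgroup $M$ is automatically normal in $G$ (any subgroup containing the commutator subgroup is normal). Thus $G$ has a normal maximal subgroup, and Theorem \ref{connected-component} applies, yielding the conclusion.

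There is no real obstacle here; the corollary is essentially a one-line deduction once Theorem \ref{connected-component} is in hand, with the commutator subgroup argument providing the normal maximal subgroup that the hypothesis of that theorem demands.
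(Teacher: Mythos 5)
Your proposal is correct and follows essentially the same route as the paper: both reduce the corollary to Theorem \ref{connected-component} by exhibiting a maximal subgroup of $G$ that is normal in $G$. The only difference is cosmetic — the paper takes the penultimate term $G_{k-1}$ of a normal series with cyclic prime-order factors, while you pull back an index-$p$ subgroup of the non-trivial abelianization $G/G'$ through the correspondence theorem; both are valid one-step verifications of the same key fact.
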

\begin{proof}
It suffices to prove that a finite solvable group $G$ always have a maximal subgroup which is normal in $G$. For finite groups, an equivalent definition of solvability is as follows: A finite group $G$ is solvable if there are subgroups $\{e\}=G_0 \trianglelefteq G_1 \trianglelefteq \cdots \trianglelefteq G_{k-1}\trianglelefteq G_k=G$ such that each factors $G_{i+1}/G_i$ is a cyclic group of prime order. So in particular, $G/G_{k-1}$ is a cyclic group of prime order. Hence $G_{k-1}$ is a maximal subgroup of $G$ which is normal in $G$.
\end{proof}

{\remark\label{diam=4-remark} In Theorem \ref{connected-component}, we have proved that the diameter of the connected component is at most $4$. However, we are yet to find an example of a group $G$, where the diameter of the component is equal to $4$.}

Theorem \ref{isolated-vertex-theorem} and Theorem \ref{connected-component}, motivates us to put forward the next definition. 
\begin{definition}
	Let $G$ be a group. The {\it deleted co-maximal subgroup graph} of $G$, denoted by $\Gamma^*(G)$, is defined as the graph obtained by removing the isolated vertices from $\Gamma(G)$.
\end{definition}
Thus we have the immediate corollary:
\begin{corollary}\label{Gamma*-connected}
	Let $G$ be a finite solvable group. Then $\Gamma^*(G)$ is connected and $diam(\Gamma^*(G))\leq 4$. If $G$ is nilpotent, then $diam(\Gamma^*(G))\leq 3$ and $\Gamma^*(G)=\Gamma(G)$ if and only if $\Phi(G)$ is trivial.
\end{corollary}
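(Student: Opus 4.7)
The plan is to derive all three assertions by stitching together the preceding results rather than arguing from scratch. For connectedness of $\Gamma^*(G)$ when $G$ is finite solvable, I would invoke the corollary just proved (that $\Gamma(G)$ is connected apart from some isolated vertices); removing exactly those isolated vertices yields $\Gamma^*(G)$, which is therefore either empty or connected. The cyclic $p$-group case, where $\Gamma^*(G)$ is empty by Theorem \ref{edgeless}, is handled by the convention that the empty graph is connected.

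For the diameter bound when $G$ is nilpotent, I would reuse the second part of Theorem \ref{connected-component} verbatim: the unique non-trivial component of $\Gamma(G)$ already has diameter at most $3$, and $\Gamma^*(G)$ is obtained precisely by discarding the isolated vertices, so the diameter of $\Gamma^*(G)$ inherits the same bound.

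The equivalence $\Gamma^*(G)=\Gamma(G) \iff \Phi(G) = \{e\}$ for nilpotent $G$ would be proved using both directions of Theorem \ref{isolated-vertex-theorem}. If $\Phi(G)$ is trivial, then an isolated vertex $H$ of $\Gamma(G)$ would, by the converse part of that theorem (which needs nilpotency), satisfy $H \subseteq \Phi(G) = \{e\}$, contradicting non-triviality of $H$; hence no vertex is isolated and $\Gamma^*(G)=\Gamma(G)$. Conversely, if $\Phi(G)$ is non-trivial, then since $\Phi(G)$ is always a proper subgroup of a finite group, $\Phi(G)$ itself is a non-trivial proper subgroup, and the forward direction of Theorem \ref{isolated-vertex-theorem} (which does not require nilpotency) forces it to be isolated, so $\Gamma(G)$ properly contains $\Gamma^*(G)$.

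There is no real obstacle here; the only thing worth being careful about is the degenerate cases, namely cyclic $p$-groups (where $\Gamma^*(G)$ is empty) and groups with no non-trivial proper subgroups such as $\mathbb{Z}_p$ (where $\Gamma(G)$ itself is already empty and the equivalence holds vacuously). Once these are dispatched by the convention on empty graphs, the corollary reduces to a direct reading of Theorems \ref{isolated-vertex-theorem} and \ref{connected-component}.
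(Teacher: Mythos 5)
Your proposal is correct and matches the paper's intent: the paper states this as an immediate corollary of Theorem \ref{isolated-vertex-theorem}, Theorem \ref{connected-component} and the preceding corollary on solvable groups, which is exactly how you assemble it. Your extra care with the degenerate cases (empty $\Gamma^*(G)$ for cyclic $p$-groups, and $\Phi(G)$ being proper so that it is itself an isolated vertex when non-trivial) only makes explicit what the paper leaves implicit.
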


\begin{remark} \label{non-solvable-connected-remark}
	There exists groups like $S_n$ with $n\geq 5$ which are not solvable but has a maximal subgroup $A_n$ which is normal in $S_n$. Thus there exists finite non-solvable groups $G$ such that $\Gamma^*(G)$ is connected. Presently, authors are not aware of any finite non-solvable group $G$ such that $\Gamma^*(G)$ is disconnected. 
\end{remark}

\section{Some Characterizations of $\Gamma(G)$ and $\Gamma^*(G)$}
In this section, we characterize some graph properties like completeness, bipartiteness, girth etc. of $\Gamma(G)$ and $\Gamma^*(G)$. We note that authors in \cite{akbari}, (See Theorem 3.5 in \cite{akbari} or Theorem \ref{universal-akbari}) proved that if $G$ is nilpotent, then $\Gamma(G)$ has an universal vertex if and only if $G \cong \mathbb{Z}_p\times \mathbb{Z}_q$. However, in this section, we prove that they are not equivalent in general, i.e., if $G$ is not nilpotent, there are other groups whose co-maximal subgroup graph has universal vertex. In fact, we characterize when $\Gamma(G)$ and $\Gamma^*(G)$ has an universal vertex, is complete, is a star graph etc.

\begin{theorem}\label{complete} $\Gamma(G)$ is a complete graph on more than one vertices if and only if $G\cong \mathbb{Z}_p \times \mathbb{Z}_q$, where $p$ and $q$ are (not necessarily distinct) primes.
\end{theorem}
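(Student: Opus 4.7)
The plan is a bidirectional proof in which the reverse direction is a direct computation and the forward direction is a structural analysis of the subgroup lattice forced by completeness.

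For the reverse direction the verification is direct. If $G\cong\mathbb{Z}_p\times\mathbb{Z}_p$, then the nontrivial proper subgroups are the $p+1$ subgroups of order $p$; any two distinct such subgroups intersect trivially, so their product has order $p^2=|G|$ and every pair is adjacent, so $\Gamma(G)$ is complete on $p+1\geq 3$ vertices. If $G\cong\mathbb{Z}_{pq}$ with $p\neq q$, there are exactly two nontrivial proper subgroups, of orders $p$ and $q$, whose product has order $pq=|G|$, so $\Gamma(G)=K_2$.

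For the forward direction, suppose $\Gamma(G)$ is complete on at least two vertices. The key structural observation is that the subgroup lattice of $G$ is severely restricted: if $H_1\subsetneq H_2$ are two nontrivial proper subgroups, then $H_1H_2=H_2\neq G$, contradicting that $H_1$ and $H_2$ are adjacent. Hence no nontrivial proper subgroup of $G$ contains another, which simultaneously forces each nontrivial proper subgroup to be maximal (nothing proper lies strictly above it) and to be of prime order (nothing nontrivial lies strictly below it, so by Lagrange it admits no nontrivial proper subgroup).

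Given this, I would invoke Sylow theory to restrict $|G|$. If three distinct primes $p,q,r$ divided $|G|$, then Sylow $p$- and Sylow $q$-subgroups $P,Q$ of orders $p$ and $q$ would be nontrivial proper subgroups with $P\cap Q=\{e\}$, so $|PQ|=pq<|G|$, contradicting $PQ=G$. Hence $|G|$ has at most two prime divisors. If $p^2\mid|G|$ for some prime $p$, then any Sylow $p$-subgroup has order at least $p^2$ and so cannot be a nontrivial proper subgroup of prime order; the only escape is that this Sylow $p$-subgroup equals $G$, i.e.\ $G$ is a $p$-group, in which case its maximal subgroups have both index $p$ and order $p$, forcing $|G|=p^2$. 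Otherwise $|G|$ is squarefree, which combined with the previous bound and the assumption that $\Gamma(G)$ has at least two vertices gives $|G|=pq$ for distinct primes $p,q$.

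Finally I would classify the two remaining cases. For $|G|=p^2$, $G$ is either $\mathbb{Z}_{p^2}$ or $\mathbb{Z}_p\times\mathbb{Z}_p$, and the cyclic case has only one nontrivial proper subgroup so the hypothesis rules it out. For $|G|=pq$ with $p<q$, Sylow produces a unique, hence normal, Sylow $q$-subgroup; if the Sylow $p$-subgroup were non-unique, two distinct Sylow $p$-subgroups $P_1,P_2$ of order $p$ would satisfy $|P_1P_2|=p^2\neq pq$, again contradicting completeness, so the Sylow $p$-subgroup is also unique and normal, yielding $G\cong\mathbb{Z}_p\times\mathbb{Z}_q$. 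The main obstacle is really just organizing the case analysis cleanly; the chain argument in the structural step is the crux and does essentially all the work, while Sylow theory only fills in the remaining numerical details.
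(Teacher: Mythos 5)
Your proof is correct, and it shares the paper's crucial first move: completeness forbids any containment between two vertices, so every non-trivial proper subgroup is simultaneously maximal and minimal, hence of prime order, and distinct vertices intersect trivially. Where you diverge is the endgame. The paper exploits adjacency directly: for any two vertices, $|G|=|H_iH_j|=p_ip_j$, so with at least three vertices all these primes coincide and $|G|=p^2$ at once, with no Sylow theory needed; the $K_2$ case is then dispatched separately (and rather tersely, as ``trivially follows''). You instead bound the arithmetic of $|G|$ via Sylow/Cauchy arguments (at most two prime divisors, square-free or equal to $p^2$) and then classify the groups of order $p^2$ and $pq$ explicitly, including the Sylow-counting argument that rules out the non-abelian group of order $pq$ --- a detail the paper omits, so your treatment of the $K_2$ case is actually more complete, at the cost of being less elementary for the $n\geq 3$ case. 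One presentational nit: in the three-prime step you assert the Sylow $p$- and $q$-subgroups have orders exactly $p$ and $q$; you should either note that this follows from the prime-order property of all vertices (otherwise a Sylow subgroup is already a vertex of non-prime order, a contradiction), or simply observe that $|PQ|\leq |P|\,|Q|<|G|$ holds whatever their orders, or use Cauchy subgroups of orders $p$ and $q$ instead --- any of these closes the phrasing gap, and the argument is sound as intended.
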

\begin{proof} We first assume that $G\cong \mathbb{Z}_p \times \mathbb{Z}_q$. If $p\neq q$, then $\Gamma(G)\cong \mathbb{Z}_2$ and hence complete. If $p=q$, then $G$ has exactly $p+1$ subgroups of order $p$, say $H_1,H_2,\cdots, H_{p+1}$ and no other proper nontrivial  subgroups. Hence $S=\{H_1,H_2,\cdots, H_{p+1}\}$. Note that $|H_iH_j|=\frac{|H_i||H_j|}{|H_i\cap H_j|}=p^2$. Thus $H_iH_j=G$ for $i\neq j$. Thus $\Gamma(G)$ is a complete graph of order $p+1$. 
	
	Conversely, we assume that  $\Gamma(G)=K_n$ of order $n>2$ and let $H_1,H_2,\cdots, H_n $ be the subgroups of $G$ which form a complete graph. Clearly, $H_i \nsubseteq H_j$  and $H_i \cap H_j=\{e\}$
	for all $i,j$ with $ i \neq j$, because if $H_i \subseteq H_j$, then $H_iH_j  \subseteq H_j \neq G$, i.e $H_i $ is not adjacent to $H_j$ which contradicts by our assumption. Also, if $ H_i \cap H_j=K \neq \{e\}$, then $H_i K \subseteq H_i\neq G$, i.e  $H_i $ is not adjacent to $K$ which contradicts by our assumption. Similarly $H_i$'s  do not have any proper subgroup and hence $H_i$'s  are prime order subgroups of $G$ of order $p_i$. Now as $H_i $ is adjacent to $H_j$ for all $i,j$ with $ i \neq j$, we have $H_iH_j =G$, i.e $|G|=|H_i||H_j |=p_ip_j$ for all $i,j$ with $ i \neq j$. Thus all $p_i$'s equal, say $p_i=p$ for all $i$. Therefore, $|G|=p^2$. So $G=\mathbb{Z}_{p^2}$ or $G=\mathbb{Z}_p \times \mathbb{Z}_p$. Since $\Gamma(G)$ has no isolated vertex, $G$ is not cyclic. Hence $G\cong \mathbb{Z}_p \times \mathbb{Z}_p$ and $n=p+1$, so $p=n -1$ is a prime. If $\Gamma(G)=K_2$, then it trivially follows that $G\cong \mathbb{Z}_p \times \mathbb{Z}_q$, for distinct primes $p$ and $q$.
\end{proof}

\begin{theorem}\label{star-graph} Let $G$ be a group of order $n$. $\Gamma(G)$ is a star graph $ K_{1,p}$ if and only if $G$ is a group of order $n=pq$, where $p>q$ are distinct primes.
\end{theorem}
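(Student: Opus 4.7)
The plan is to treat the two directions separately. For the backward direction, suppose $|G|=pq$ with $p>q$ distinct primes. Sylow's theorem forces $n_p=1$ (since $n_p \mid q$, $n_p \equiv 1 \pmod{p}$, and $q<p$), so the Sylow $p$-subgroup $P$ is normal. Producing a star with $p\ge 2$ leaves requires the non-abelian case, where $n_q=p$ and the Sylow $q$-subgroups are $Q_1,\ldots,Q_p$. By Lagrange, $\{P,Q_1,\ldots,Q_p\}$ is the full list of $p+1$ proper nontrivial subgroups. Coprime orders give $|PQ_i|=pq$, so $P\sim Q_i$; while $|Q_iQ_j|=q^2<pq$ shows $Q_iQ_j\ne G$, so the $Q_i$'s are mutually non-adjacent. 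Hence $\Gamma(G)\cong K_{1,p}$.

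For the forward direction, let $H$ be the central vertex of $\Gamma(G)=K_{1,p}$ and $K_1,\ldots,K_p$ its leaves. My first goal is to show $H\cap K_i=\{e\}$ for every $i$. If the intersection were nontrivial, it would be a proper subgroup of both $H$ and $K_i$ and hence a vertex of $\Gamma(G)$, so it would have to equal some $K_j$ with $j\ne i$; but $K_j\subseteq H$ forces $HK_j=H\ne G$, contradicting the edge $H\sim K_j$. This immediately gives $|G|=|H||K_i|$, so all leaves have the same order $k$. A parallel analysis rules out $K_i\cap K_j\ne\{e\}$ for $i\ne j$: the intersection cannot equal $H$ (as $H\subseteq K_i$ would contradict $HK_i=G$), cannot equal $K_i$ or $K_j$ (equal leaf orders force $K_i=K_j$), and cannot equal any other $K_l$ (since $|K_l|=k=|K_i|$ rules out $K_l\subsetneq K_i$).

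With trivial intersections in hand, the rest flows quickly. From $K_iK_j\ne G$ and $|K_iK_j|=k^2\le hk=|G|$ I get $k<h$. If $K_i$ contained a proper nontrivial subgroup $L$, then $L$ would be a vertex of $\Gamma(G)$ strictly inside $K_i$; but $L\ne H$ since $|L|<k<h$, and $L\ne K_l$ since $K_i\cap K_l=\{e\}$, a contradiction, so $|K_i|$ is prime, say $q$. The analogous argument applied to $H$ (using $H\cap K_i=\{e\}$) shows $|H|$ is prime, say $p$, and $p=h>k=q$. Therefore $|G|=pq$ with $p>q$ primes. The main obstacle I anticipate is the trivial-intersection step: the remaining order computations and the \emph{no proper subgroup} analysis all cascade from it, but the case split on where a nontrivial $H\cap K_i$ could sit among the star's vertices requires careful bookkeeping.
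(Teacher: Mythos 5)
Your overall strategy is the same as the paper's: identify the center and the leaves as subgroups of prime order with pairwise trivial intersections, compare orders via $|AB|=|A||B|/|A\cap B|$, and in the converse direction use the Sylow structure of a group of order $pq$. The trivial-intersection bookkeeping in your forward direction is correct, and the converse is handled at the same level of detail as in the paper (both treat the cyclic case, where the graph is only $K_{1,1}$, loosely, and both implicitly assume at least two leaves).

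There is, however, one concrete gap in your forward direction: you never prove that the number of leaves equals $|H|$. You fix $p$ at the outset as the number of leaves of $K_{1,p}$, and at the end you write ``$|H|$ is prime, say $p$,'' silently identifying the order of the central vertex with the number of leaves. What your argument actually establishes is $|G|=hk$ with $h=|H|>k=|K_i|$ both prime; the statement (and the paper's proof) additionally asserts that the star parameter $p$ is the larger prime factor, i.e., that there are exactly $h$ leaves. The paper closes this with a Sylow count: the leaves are precisely the subgroups of order $q$ (every vertex other than $H$ is a leaf of order $q$, and every subgroup of order $q$ is a vertex distinct from $H$), hence they are the Sylow $q$-subgroups, so their number satisfies $p=n_q\equiv 1\ (\mathrm{mod}\ q)$ and $n_q\mid h$; since $h$ is prime and $p\geq 2$, this forces $p=h$. (Alternatively, one can count elements: if $G$ had an element of order $hk$ it would be cyclic with a unique subgroup of order $k$, contradicting $p\geq 2$; then $hk=1+(h-1)+p(k-1)$ gives $p=h$.) Without some such step your proof only shows that $|G|$ is a product of two distinct primes, not that the star is $K_{1,p}$ for that specific $p$, so you should add this final identification.
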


\begin{proof} Let $\Gamma(G)$ be a star $K_{1,p}$, where $H$ is the universal vertex and $ K_1,K_2, \cdots,K_p$ are leaves [see Figure \ref{star-proof}]. If $H$ has any proper nontrivial subgroup $L$, then $HL\neq G$. Therefore $ H$ is not adjacent to $L$. However as $H$ is a universal vertex, no such $L$  exist. Thus $H$ has no non-trivial proper subgroup, it means $H$  is a subgroup of prime order, say $\eta$. Also, we have $HK_i=G$, for all $i=1,2,\cdots, p$. Thus $n=|G|=\frac{|H||K_i|}{|H\cap K_i|}= \eta |K_i|$. Hence, $|K_i|=\frac{n}{\eta}$,  for all $i=1,2,\cdots, p$.
	
	Note that as all $K_i$'s are of the same order, they are not contained in each other. Moreover, $K_i$'s  can not have any non-trivial proper subgroups (otherwise if $L \subsetneq K_i$ is a subgroup, then $L$ must be adjacent to $H$, i.e., $L=K_j$. But that means $|K_i|>|L|=|K_j|=|K_i|$, a contradiction). Therefore, $K_i$'s are prime order subgroups. Hence $\frac{n}{\eta}$ is prime, say $q$. So $n=q\eta$. Note that $q \neq \eta$, as otherwise $n=q^2$ and $G=\mathbb{Z}_{q^2}$ or $G=\mathbb{Z}_q \times \mathbb{Z}_q$. In none of the case, $\Gamma(G)$ is a star. Thus, $G$ has $ p$  subgroups $ K_1, K_2, \cdots, K_p$ each has order $q$. By Sylow's Theorem, the number of Sylow $q$-subgroups, $n_q$ is given by $p=n_q=1+tq|\eta$. Hence, $p|\eta$. So $p=\eta$ and $n=pq$.
	Also, as $ K_1$ is not adjacent to $K_2$. we have $K_1K_2 \neq G$. So, $pq=|G|>|K_1K_2|=\frac{|K_1||K_2|}{|K_1\cap K_2|}=q^2$. It means $p>q$. 
	
	For the other direction, let $G$ be a group of order $pq$, with $p>q$. Then $G \cong \mathbb{Z}_{pq}$ or $G\cong \mathbb{Z}_p \rtimes \mathbb{Z}_q$  and $q|(p-1)$. In the first case, we get $\Gamma(G)\cong P_2$, which is a star. In the second case, it is easy to see that $G$ has a normal subgroup $H$ of order $p$ and $p$ subgroups $ K_1, K_2, \cdots, K_p$ each has order $q$. Now, it is easy to check that $\Gamma(G)$ is a star with $H$ as the universal vertex.
	\begin{figure}[!ht]
		\begin{center}
			\includegraphics[scale=0.35]{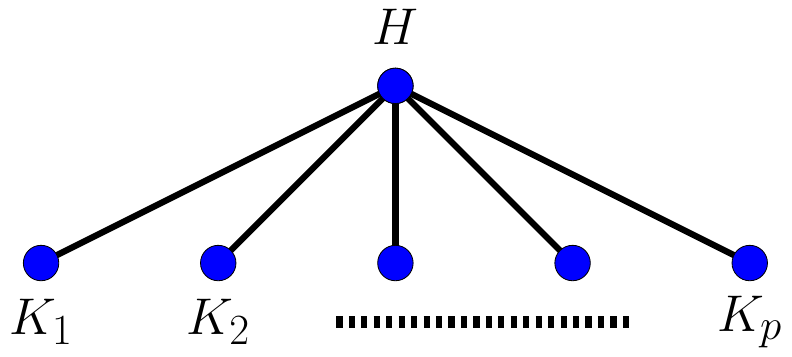}
		\end{center}
		\caption{$\Gamma(G)$ is a star graph $ K_{1,p}$}
		\label{star-proof}
	\end{figure} 		
\end{proof}

\begin{remark}
From \cite{akbari} Theorem 6.2 and Theorem \ref{star-graph}, it follows that for a finite group $G$, $\Gamma(G)$ is a star if and only if $\Gamma(G)$ is a tree if and only if $G$ is group of order $pq$, where $p$ and $q$ are distinct primes.
\end{remark}

\begin{theorem}\label{Gamma(G)-universal}
	Let $G$ be a finite group. Then $\Gamma(G)$ has an universal vertex if and only if either $G$ is non-cyclic abelian group of order $p^2$ or $G$ is group of order $pq$, where $p$ and $q$ are distinct primes.
\end{theorem}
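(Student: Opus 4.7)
The plan is to prove both directions separately, with the backward direction reducing to Theorems \ref{complete} and \ref{star-graph} and the forward direction extracting structural information from a hypothetical universal vertex.

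For the backward direction, I would dispose of the three candidate families in one sweep. If $G$ is non-cyclic abelian of order $p^2$, then $G \cong \mathbb{Z}_p \times \mathbb{Z}_p$ and Theorem \ref{complete} gives $\Gamma(G) \cong K_{p+1}$, so every vertex is universal. If $|G|=pq$ with $p\ne q$ distinct primes, then either $G$ is cyclic, in which case $G$ has exactly one subgroup of each of the orders $p$ and $q$ giving $\Gamma(G)\cong K_2$, or $G$ is non-abelian, in which case Theorem \ref{star-graph} produces a star whose center is universal.

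For the forward direction, let $H$ be a universal vertex of $\Gamma(G)$ (assuming $\Gamma(G)$ has at least two vertices, else the statement is vacuous). I would extract three structural facts about $H$. First, any non-trivial proper subgroup $L$ of $H$ would satisfy $HL=H\ne G$, contradicting adjacency; hence $H$ has no non-trivial proper subgroup, so $|H|=p$ for some prime $p$. Second, if $H$ were properly contained in some proper non-trivial subgroup $M$, then $HM=M\ne G$, again a contradiction, so $H$ is a maximal subgroup. Third, for any other vertex $K$, we have $|H\cap K|\in\{1,p\}$; the value $p$ would force $H\subseteq K\subsetneq G$, contradicting maximality, so $H\cap K=\{e\}$ and $HK=G$ yields $|K|=|G|/p$. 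Thus every vertex other than $H$ has the common order $m:=|G|/p$.

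Now I would split on nilpotency. If $G$ is nilpotent, Theorem \ref{universal-akbari} directly gives $G\cong \mathbb{Z}_p\times\mathbb{Z}_q$ for primes $p,q$, which falls under the stated conclusion. If $G$ is not nilpotent, then $|G|$ is not a prime power, so there is a prime $q\mid |G|$ with $q\ne p$; by Cauchy's theorem $G$ has a cyclic subgroup of order $q$, which is a vertex different from $H$ and hence has order $m$, forcing $m=q$ and $|G|=pq$. I would also have to check that $p$ cannot appear to a higher power in $|G|$: if $p^2\mid|G|$, then a subgroup of order $p$ distinct from $H$ exists (either by Sylow counting or by the same Cauchy argument) and its order would have to equal $m$, contradicting $m=q\ne p$; similarly no third prime $r$ can divide $|G|$.

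The main obstacle is the bookkeeping in the non-nilpotent case, in particular ruling out extra prime factors and higher $p$-powers in $|G|$ once one knows every non-$H$ vertex has order $|G|/p$; this is handled by repeated appeals to Cauchy's theorem and the forced uniqueness of the common order $m$.
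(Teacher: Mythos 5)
Your proof is correct, but it follows a genuinely different route from the paper's. The paper never splits on nilpotency: after showing the universal vertex $H$ has prime order $p$ and writing $|G|=pm$, it splits on whether $p\mid m$; in the case $p\mid m$ it shows $G$ is a $p$-group with $|G|\le p^3$ and then eliminates, one by one, the two abelian and two non-abelian groups of order $p^3$ (isolated vertex $\langle a^p\rangle$ in the modular group, a non-adjacent pair of order-$p$ subgroups in the exponent-$p$ group), while in the case $p\nmid m$ it forces $m$ to be prime via Cauchy. You instead extract the stronger structural fact that \emph{every} vertex other than $H$ has order $|G|/p$ (since $|H|=p$ forces $H\cap K=\{e\}$), then dispose of the nilpotent case by citing Theorem~\ref{universal-akbari} of Akbari et al.\ and of the non-nilpotent case by a single application of Cauchy's theorem, which instantly gives $|G|=pq$. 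This buys a much shorter argument that completely avoids the order-$p^3$ classification, at the price of leaning on the previously published nilpotent characterization, whereas the paper's treatment is self-contained (and, in spirit, reproves that characterization along the way). Two small remarks: your final ``bookkeeping'' step is superfluous --- once $m=q$ you already have $|G|=pm=pq$, so no higher power of $p$ or third prime can occur; and in that superfluous step the claim that $p^2\mid|G|$ yields a second subgroup of order $p$ is not quite right in general (a subgroup of order $p^2$, which is a vertex distinct from $H$ of the wrong order, is the safe witness), but since the step is unnecessary this does not affect the proof. Your handling of the backward direction via Theorems~\ref{complete} and \ref{star-graph} matches the paper's.
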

\begin{proof}
	Let $H$ be a non-trivial proper subgroup of $G$ such that $H$ is an universal vertex in $\Gamma(G)$. Clearly $H$ is both maximal and minimal subgroup in $G$, as otherwise $H$ fails to be an universal vertex. Thus $H$ is a prime order subgroup of $G$ of order, say $p$. Thus $|G|=pm$. Clearly $m\neq 1$, i.e., $|G|\neq p$, as in that case $\Gamma(G)$ has no vertex.
	
	If $p|m$, then $H$ is contained in some Sylow $p$-subgroup $K$ of $G$ and hence $HK=K$. If $K \neq G$, then we get a contradiction as $H$ is an universal vertex in $\Gamma(G)$. If $K=G$, then $G$ is a $p$-group. Let $|G|=p^k$. If $k\geq 4$, then $G$ has a subgroup $L$ of order $p^2$ and it is easy to see that $|HL|\leq p^3$, i.e., $H\not\sim L$ in $\Gamma(G)$, a contradiction. Thus $|G|=p^2$ or $p^3$. If $G$ is cyclic, then, by Theorem \ref{edgeless}(1), $\Gamma(G)$ is edgeless, a contradiction. Thus $G$ is either a non-cyclic abelian group of order $p^2$ or a non-cyclic group of $p^3$.
	
	If $G$ is a non-cyclic abelian group of order $p^2$, then $G \cong \mathbb{Z}_p \times \mathbb{Z}_p$ and by Theorem \ref{complete}, $\Gamma(G)$ is a complete graph. If $G$ is a non-cyclic group of order $p^3$, then there are exactly two non-abelian groups and two abelian groups of order $p^3$, upto isomorphism. We discuss each of these possibilities separately:
	
	{\bf Abelian Groups:} In this case, $G \cong \mathbb{Z}_p \times \mathbb{Z}_p \times \mathbb{Z}_p$ or $G\cong \mathbb{Z}_p\times \mathbb{Z}_{p^2}$. In both cases, for every subgroup of order $p$, we can find another subgroup of order $p$ whose product is not equal to $G$, i.e., $\Gamma(G)$ does not have any universal vertex.
	
	{\bf Non-Abelian Groups:} In this case, it is well known that the Frattini subgroup of a non-abelian group of order $p^3$ is non-trivial. Thus it always have isolated vertices, a contradiction.
	
	
	Next we consider the case when $p\nmid m$. If $m$ has two distinct prime factors $q$ and $r$, then, by Cauchy's theorem, there exist a subgroup $K$ of order $q$ and $|HK|=pq<pm$. Even if $m=q^t$ where $q$ is a prime and $t\geq 2$, we get a subgroup $K$ of order $q$ and $|HK|=pq<pm$. So only possibility is $m=q$, i.e., $|G|=pq$. Let $p>q$. If $q\nmid (p-1)$, then $G$ is cyclic and $\Gamma(G)\cong K_2$. If $q\mid (p-1)$, then $G$ is either cyclic or a non-abelian group of order $pq$. In the latter case, $G$ has unique normal subgroup $H$ of order $p$ and all other vertices of $\Gamma(G)$ are adjacent to $H$.
	
	Combining all the cases, it follows that if $\Gamma(G)$ has a universal vertex, then either $G$ is non-cyclic abelian group of order $p^2$ or $G$ is group of order $pq$, where $p$ and $q$ are distinct primes.
	
	The converse follows from Theorem \ref{complete} and Theorem \ref{star-graph}.	
\end{proof}	

\begin{theorem}
	Let $G$ be a nilpotent group. $\Gamma^*(G)$ is a star if and only if either $G$ is a group of order $pq$ or the cyclic group of order $p^rq$, where $p,q$ are distinct primes and $r\geq 2$ is an integer.
\end{theorem}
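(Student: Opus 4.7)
The plan is to handle the two directions separately, with the forward direction as the main work: I would start from the Sylow decomposition $G = P_1 \times \cdots \times P_k$ of the nilpotent group $G$ and progressively eliminate cases until only the two listed families remain.

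For the backward direction I would verify each family directly. If $|G| = pq$ then nilpotence forces $G \cong \mathbb{Z}_{pq}$, which has exactly two proper non-trivial subgroups whose product is $G$, so $\Gamma(G) = \Gamma^*(G) = K_{1,1}$. For $G \cong \mathbb{Z}_{p^r q}$ with $r \geq 2$, I would list the proper non-trivial subgroups as $\mathbb{Z}_{p^i}$ for $1 \leq i \leq r$ together with $\mathbb{Z}_{p^j q}$ for $0 \leq j \leq r-1$, compute $\Phi(G) = \mathbb{Z}_{p^{r-1}}$, and invoke Theorem \ref{isolated-vertex-theorem} to conclude that the isolated vertices are exactly $\mathbb{Z}_{p^i}$ for $1 \leq i \leq r-1$. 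A direct check using $|HK| = |H||K|/|H \cap K|$ then shows that $\mathbb{Z}_{p^r}$ is adjacent to every $\mathbb{Z}_{p^j q}$ while no two $\mathbb{Z}_{p^j q}$ are adjacent, giving $\Gamma^*(G) \cong K_{1, r}$.

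For the forward direction, assume $G$ is nilpotent and $\Gamma^*(G)$ is a star. I would first rule out $k \geq 3$ by exhibiting a triangle: the Hall subgroups $A_i := \prod_{j \neq i} P_j$ for $i = 1, 2, 3$ are distinct proper non-trivial subgroups of $G$, and for $i \neq j$ the (normal) product $A_i A_j$ contains every Sylow subgroup of $G$, hence equals $G$. I would then rule out $k = 1$: a cyclic $p$-group has edgeless $\Gamma(G)$ by Theorem \ref{edgeless}, while a non-cyclic $p$-group has $|G/\Phi(G)| = p^d$ with $d \geq 2$ and therefore at least three maximal subgroups, all normal in $G$; any two distinct normal maximal subgroups have product equal to $G$, again producing a triangle.

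This leaves $k = 2$, so $G = P \times Q$. If $P$ is non-cyclic I would pick three distinct maximal subgroups $M_1, M_2, M_3$ of $P$; then $M_1 \times Q, M_2 \times Q, M_3 \times Q$ are pairwise distinct with pairwise products equal to $G$, another triangle, and symmetrically for $Q$. So both factors are cyclic and $G \cong \mathbb{Z}_{p^r q^s}$. The main step, and the part I expect to require the most care, is then to pin down $\Gamma^*(\mathbb{Z}_{p^r q^s})$: compute $\Phi(G) = \mathbb{Z}_{p^{r-1}q^{s-1}}$, use Theorem \ref{isolated-vertex-theorem} to discard the subgroups $\mathbb{Z}_{p^i q^j}$ with $i < r$ and $j < s$, and apply the formula $|\mathbb{Z}_{p^{i_1} q^{j_1}} \cdot \mathbb{Z}_{p^{i_2} q^{j_2}}| = p^{\max(i_1, i_2)} q^{\max(j_1, j_2)}$ to see that the surviving $r + s$ vertices form a complete bipartite graph $K_{r, s}$ with parts $\{\mathbb{Z}_{p^r q^j} : 0 \leq j < s\}$ and $\{\mathbb{Z}_{p^i q^s} : 0 \leq i < r\}$. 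A complete bipartite graph $K_{r,s}$ is a star precisely when $\min(r, s) = 1$, so taking $s = 1$ without loss of generality yields $G \cong \mathbb{Z}_{p^r q}$, recovering the two families in the statement.
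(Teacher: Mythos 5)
Your proof is correct, but it takes a genuinely different route from the paper's. The paper argues directly from the star's structure: it first shows the universal vertex $H$ must be a maximal subgroup, then proves $G$ is cyclic by an element-choice argument (picking $a\notin H\cup M$ for a second maximal subgroup $M$ and showing $\langle a\rangle$ cannot be proper), and only then writes $|G|=p_1^{\alpha_1}\cdots p_k^{\alpha_k}$ and excludes the bad cases via a $k$-clique ($k\geq 3$), edgelessness ($k=1$), a $4$-cycle (both exponents $\geq 2$), and, in the case $\Gamma(G)=\Gamma^*(G)$, an appeal to Theorem \ref{star-graph}. You instead start from the Sylow decomposition of the nilpotent group, kill $k\geq 3$, $k=1$ and non-cyclic Sylow factors by exhibiting triangles of normal maximal (or Hall-type) subgroups, and then compute $\Gamma^*(\mathbb{Z}_{p^rq^s})\cong K_{r,s}$ outright, reading off the star condition as $\min(r,s)=1$; this bypasses both the paper's cyclicity claim and its use of Theorem \ref{star-graph}, unifies the two cases ($|G|=pq$ versus $p^rq$, $r\geq 2$) in one computation, and yields the complete bipartite description of $\Gamma^*(\mathbb{Z}_{p^rq^s})$ as a useful by-product (consistent with the paper's later bipartiteness and girth theorems). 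The trade-off is that you rely on the Burnside basis theorem (a non-cyclic $p$-group has at least three maximal subgroups) and on the explicit subgroup lattice of $\mathbb{Z}_{p^rq^s}$, whereas the paper's argument stays closer to the graph-theoretic hypothesis; both are complete, and your verification of the converse for $\mathbb{Z}_{p^rq}$ is in fact slightly more careful than the paper's, since your list of leaves correctly includes the subgroup of order $q$.
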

\begin{proof}
	If $G$ is a cyclic group of order $p^rq$, then $G\cong \mathbb{Z}_{p^rq}$ and $\Gamma(G)$ is the union of isolated vertices $\langle pq \rangle,\langle p^2q \rangle,\ldots,\langle p^{r-1}q \rangle$ and a star, where $\langle q \rangle$ is the universal vertex and $\langle p \rangle,\langle p^2 \rangle,\ldots,\langle p^{r-1} \rangle$ are the leaves. If $G$ is a group of order $pq$, then by Theorem \ref{star-graph}, $\Gamma(G)=\Gamma^*(G)$ is a star.
	
	Conversely, let $G$ be a nilpotent group such that $\Gamma^*(G)$ is a star. Then two cases may occur.
	
	{\bf Case 1. $\Gamma(G)=\Gamma^*(G)$:} Then by Theorem \ref{star-graph}, $G$ is a group of order $n=pq$.
	
	{\bf Case 2. $\Gamma(G)\neq \Gamma^*(G)$:} This means $\Gamma(G)$ has at least one isolated vertex. Let $H$ be the universal vertex of the star $\Gamma^*(G)$ and $K_1,K_2,\ldots,K_t$ be the leaves of the star. 
	
	{\bf Claim 1:} $H$ is a maximal subgroup of $G$: If it is not, then there exists a subgroup $L$ of $G$ such that $H\subsetneq L\subsetneq G$. However, this impies $G=HK_i\subsetneq LK_i$, i.e., $LK_i=G$, i.e., $H \neq L$ and $L\sim K_i$ for $i=1,2,\ldots,t$, which contradicts that it is a star.	
	
	{\bf Claim 2:} $G$ is a cyclic group: If $G$ has a unique maximal subgroup $H$, then by Lemma  \ref{preli-result}(1), $G$ is a cyclic $p$-group, which implies that $\Gamma(G)$ is edgeless, a contradiction. Thus $G$ has at least one maximal subgroup $M$, other than $H$. As $G$ is nilpotent, both $H$ and $M$, being maximal subgroups, are normal in $G$ and $H\subsetneq HM=G$, i.e., $H \sim M$ in $\Gamma(G)$. However, this means $M=K_i$ for some $i \in \{1,2,\ldots,t\}$. Now, choose $a \in G\setminus (H\cup M)$ (note that $G\neq H \cup M$) and set $A=\langle a \rangle$. If $A$ is a proper subgroup of $G$, then $M\subsetneq MA=G$, i.e., $K_i=M\sim A$. Now, as $K_i$ is a leaf, $A$ must be the universal vertex $H$, i.e., $A=H$. However, as $a \notin H$, we have $A\neq H$, a contradiction. Hence, $A$ is not a proper subgroup of $G$, i.e., $A=G=\langle a\rangle$ is cyclic.
	
	Let $|G|={p_1}^{\alpha_1}{p_2}^{\alpha_2}\cdots {p_k}^{\alpha_k}$. If $k\geq 3$, then we get a clique of size $k$, namely $\langle p_1\rangle,\langle p_2\rangle , \ldots, \langle p_k\rangle$, a contradiction. Also $k=1$ implies that $\Gamma(G)$ is edgeless. Thus $k=2$, i.e., $|G|={p_1}^{\alpha_1}{p_2}^{\alpha_2}$. If possible, let both $\alpha_1,\alpha_2 \geq 2$. Then, we get a $4$-cycle, namely $\langle p_1\rangle,\langle p_2\rangle,\langle p^2_1\rangle,\langle p^2_2\rangle$, a contradiction. If both $\alpha_1=\alpha_2=1$, then $\Gamma(G)\cong K_2$, i.e., without any isolated point. Thus, $|G|$ is of the required form.
\end{proof}	

\begin{theorem}\label{Gamma*-complete}
	Let $G$ be a nilpotent group. $\Gamma^*(G)$ is a complete graph if and only if either $G$ is isomorphic to $\mathbb{Z}_p\times \mathbb{Z}_p$ or $Q_8$.
\end{theorem}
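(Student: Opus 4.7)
The plan is to prove the two directions separately: the forward direction is a direct verification on the two listed groups, while the converse uses Sylow decomposition of the nilpotent group and then reduces to a structural analysis in the $p$-group case, whose final step is the main obstacle.

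For sufficiency, if $G\cong\mathbb{Z}_p\times\mathbb{Z}_p$, Theorem~\ref{complete} yields $\Gamma(G)$ complete, and $\Phi(G)=\{e\}$ together with Theorem~\ref{isolated-vertex-theorem} gives $\Gamma^*(G)=\Gamma(G)$. If $G\cong Q_8$, the four non-trivial proper subgroups are $\langle a^2\rangle$ of order $2$ (which equals $\Phi(Q_8)$, hence is isolated) and three cyclic subgroups of order $4$; any two order-$4$ subgroups meet in $\langle a^2\rangle$, so $|HK|=16/2=8$, making $\Gamma^*(Q_8)\cong K_3$.

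For necessity, suppose $G$ is nilpotent with $\Gamma^*(G)$ complete; Theorem~\ref{edgeless} rules out cyclic $p$-groups. Write the Sylow decomposition $G=P_1\times\cdots\times P_k$. If $k\ge 3$, then $P_1$ and $P_2$ are both non-isolated (each complements the product of the other Sylows), yet $P_1P_2=P_1\times P_2\ne G$, a contradiction. If $k=2$ with $G=P\times Q$, a parallel argument (using $H\times\{e\}$ versus $P\times\{e\}$ when $P$ has a non-trivial proper subgroup $H$ outside $\Phi(P)$, and using $\Phi(P)\times Q$ versus $\{e\}\times Q$ when $P$ is cyclic of order $p^m$ with $m\ge 2$) forces both Sylows to have prime order, yielding $G\cong\mathbb{Z}_{pq}$. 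This case actually satisfies the hypothesis (giving $\Gamma^*(G)=K_2$), which suggests the statement should also include $\mathbb{Z}_{pq}$.

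The substantive case is $k=1$: $G$ is a non-cyclic $p$-group of order $p^n$, $n\ge 2$. The key observation is that every non-isolated proper subgroup of $G$ must be maximal, for otherwise, picking a maximal $M$ containing such an $H$, both $H$ and $M$ would be non-isolated (using nilpotency and the $\ge p+1$ pairwise-adjacent maximals), while $HM=M\ne G$. By Theorem~\ref{isolated-vertex-theorem}, every non-maximal proper subgroup of $G$ lies in $\Phi(G)$. For $n=2$ this yields $G\cong\mathbb{Z}_p\times\mathbb{Z}_p$. For $n\ge 3$, the existence of a subgroup of order $p^{n-2}$ forces $|\Phi(G)|=p^{n-2}$ and uniqueness of the subgroup of that order; inside any maximal subgroup $M$, this unique order-$p^{n-2}$ subgroup is the only maximal subgroup of $M$, so $M$ is cyclic and $\Phi(G)=\Phi(M)\cong\mathbb{Z}_{p^{n-2}}$. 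Since $\Phi(G)$ is cyclic and contains every subgroup of $G$ of order $p$, $G$ itself has a unique subgroup of order $p$, so the classical theorem implies $G$ is cyclic (excluded) or $Q_{2^n}$ (forcing $p=2$). The final step, which is the main obstacle, is to rule out $Q_{2^n}$ for $n\ge 4$: a direct check shows $\langle a^2,b\rangle\le Q_{2^n}$ is a maximal subgroup isomorphic to $Q_{2^{n-1}}$, which is non-cyclic and contradicts the structure just derived. Hence $n=3$ and $G\cong Q_8$.
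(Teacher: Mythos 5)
Your proposal is correct, but it reaches $Q_8$ by a genuinely different route than the paper. In the case $\Gamma(G)\neq\Gamma^*(G)$ the paper works entirely with the vertex set of $\Gamma^*(G)$: it shows every vertex is a maximal subgroup, is cyclic, and meets every other vertex exactly in $\Phi(G)$, then uses the product formula to get $|G|=p_ip_j|\Phi(G)|$, the existence of at least three maximal subgroups to equalize the primes, Proposition~\ref{preli-result}(4) to conclude $G$ is a $p$-group all of whose proper subgroups are cyclic, and finally the Miller--Moreno classification (Theorem~\ref{minimal-non-cyclic}) to land on $Q_8$. You instead peel off the Sylow decomposition first: your $k\ge 3$ and $k=2$ arguments, exhibiting explicit non-adjacent non-isolated pairs such as $H\times\{e\}$ versus $P\times\{e\}$ and $\Phi(P)\times Q$ versus $\{e\}\times Q$, replace the paper's appeal to Proposition~\ref{preli-result}(2) and the Frattini-quotient divisibility fact; and in the $p$-group case you push the ``every non-isolated subgroup is maximal'' observation further, getting uniqueness of the subgroup of order $p^{n-2}$, cyclicity of all maximal subgroups, hence a unique subgroup of order $p$, and then the classical cyclic-or-generalized-quaternion theorem, with $Q_{2^n}$, $n\ge 4$, eliminated by the non-cyclic maximal subgroup $\langle a^2,b\rangle\cong Q_{2^{n-1}}$. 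The paper's route is shorter because Miller--Moreno does the endgame in one stroke; yours trades that for the (equally standard) unique-involution theorem plus a concrete check, and is arguably more self-contained in the reduction to the $p$-group case. One further point, which is a correction to the paper rather than a gap in your argument: your $k=2$ analysis rightly notes that $G\cong\mathbb{Z}_{pq}$ ($p\neq q$) has $\Gamma^*(G)=\Gamma(G)\cong K_2$, which is complete under the paper's own convention in Theorem~\ref{complete} (``complete graph on more than one vertices''); the paper's proof silently loses this case when it cites Theorem~\ref{complete} as yielding only $\mathbb{Z}_p\times\mathbb{Z}_p$, so the statement of Theorem~\ref{Gamma*-complete} should either list $\mathbb{Z}_{pq}$ as well or explicitly restrict to complete graphs on at least three vertices.
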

\begin{proof}
	If $G\cong \mathbb{Z}_p\times \mathbb{Z}_p$ or $Q_8$, then the result follows from Theorem \ref{complete} and Figure \ref{examplefigure2}(A).	
	
	Conversely, let $\Gamma^*(G)$ be a complete graph. If $\Gamma(G)=\Gamma^*(G)$, then by Theorem \ref{complete}, $G \cong \mathbb{Z}_p\times \mathbb{Z}_p$ for some prime $p$. So, we assume that $\Gamma(G)\neq \Gamma^*(G)$, i.e., $\Gamma(G)$ has at least one isolated vertex. Then by Theorem \ref{isolated-vertex-theorem}(2), the Frattini subgroup of $G$, $\Phi(G)$ is non-trivial. Let $H_1,H_2,\ldots,H_n$ be the vertices of $\Gamma^*(G)$.
	
	{\bf Claim 1:} Each $H_i$ is a maximal subgroup of $G$.
	
	{\it Proof of Claim 1:}	Let there exists a proper subgroup $L$ with $H_i\subsetneq L$. As $H_i\sim H_j$ in $\Gamma^*(G)$, we have $LH_j\supsetneq H_iH_j=G$, i.e. $L\sim H_j$, i.e., $L$ is a vertex of $\Gamma^*(G)$. But $LH_i\subsetneq L\neq G$, which contradicts that $\Gamma^*(G)$ is complete. Hence $H_i$ is a maximal subgroup of $G$.
	
	Thus, all the maximal subgroups of $G$ are vertices of $\Gamma^*(G)$ and there exists at least two maximal subgroups of $G$. If the number of maximal subgroups is exactly $2$, then by Proposition \ref{preli-result}(2), $G \cong \mathbb{Z}_{p^aq^b}$. Now as $\Gamma(G)$ has at least one isolated vertex, we have $ab>1$, i.e., either $a>1$ or $b>1$. Suppose $a>1$. Then $\langle p^2 \rangle$ is not an isolated vertex and $\langle p \rangle \not\sim \langle p^2 \rangle$ in $\Gamma(G)$. This contradicts that $\Gamma^*(G)$ is complete. Thus the number of maximal subgroups of $G$ is at least $3$.
	
	{\bf Claim 2:} Each $H_i$ is cyclic.
	
	{\it Proof of Claim 2:} Let $a\in H_i\setminus \Phi(G)$ and $A=\langle a \rangle$. If $A\subsetneq H_i$, then we get a proper subgroup of $G$ which is not contained in $\Phi(G)$. Thus $A$ is a vertex of $\Gamma^*(G)$. But $AH_i\subseteq H_i\neq G$, i.e., $A\not\sim H_i$, which contradicts that $\Gamma^*(G)$ is complete. Thus $A=H_i=\langle a\rangle$, i.e., $H_i$ is cyclic.  
	
	As all the maximal subgroups of $G$ are cyclic, it follows that all subgroups of $G$ are cyclic. Next we prove that all non-maximal subgroups are contained in $\Phi(G)$. 
	
	{\bf Claim 3:} Any proper subgroup of $H_i$ is contained in $\Phi(G)$.
	
	{\it Proof of Claim 3:} If possible, let there exists a subgroup $L$ of $G$ such that $\Phi(G)\subsetneq L \subsetneq H_i$. Then $L$ is a vertex of $\Gamma^*(G)$ and $LH_i\subseteq H_i\neq G$, i.e., $L\not\sim H_i$, a contradiction, as $\Gamma^*(G)$ is complete.
	
	As $\Phi(G)$ is normal in $G$, it is also normal in each $H_i$. Now, it follows from the above claim that $H_i/\Phi(G)$ has no non-trivial proper subgroup, i.e., $|H_i/\Phi(G)|=p_i$, for some prime $p_i$, for $i=1,2,\ldots,n$. 
	
	Now, as $H_i\sim H_j$ in $\Gamma^*(G)$, we have $H_iH_j=G$, i.e., $$|G|=|H_iH_j|=\dfrac{|H_i||H_j|}{|H_i\cap H_j|}=\dfrac{p_i|\Phi(G)|\cdot p_j|\Phi(G)|}{|\Phi(G)|}=p_ip_j|\Phi(G)|.$$
	Since $G$ has at least $3$ maximal subgroups, we have $p_i=p_j=p$(say) for all $i,j \in \{1,2,\ldots,n\}$. Hence $|G|=p^2|\Phi(G)|$ and $|H_i|=p|\Phi(G)|$, i.e., $|G/\Phi(G)|=p^2$. Hence, by Proposition \ref{preli-result}(4), $G$ is a $p$-group. Let $|G|=p^k$. Then $|H_i|=p^{k-1}$.

	Now, we try to classify the group $G$. If $G$ is abelian, then as $G$ has cyclic subgroups of order $p^{k-1}$, we have $G\cong \mathbb{Z}_{p^k}$ or $\mathbb{Z}_{p^{k-1}}\times \mathbb{Z}_p$. The former can not hold as in that case $\Gamma(G)$ is edgeless. Thus $G\cong \mathbb{Z}_{p^{k-1}}\times \mathbb{Z}_p$. However, $\langle p \rangle \times \mathbb{Z}_p$, being a subgroup of order $p^{k-1}$ of $\mathbb{Z}_{p^{k-1}}\times \mathbb{Z}_p$, is a maximal subgroup which is not cyclic, a contradiction. Thus $G\not\cong \mathbb{Z}_{p^{k-1}}\times \mathbb{Z}_p$. 
	
	Hence $G$ is a non-abelian group of order $p^k$. Then $G$ is a minimal non-cyclic $p$-group. Thus, by Theorem \ref{minimal-non-cyclic}, $G$ is isomorphic to $Q_8$. (As $G$ is a non-abelian $p$-group, other two cases do not occur.)
\end{proof}

\begin{theorem}\label{Gamma*-universal}
	Let $G$ be a finite nilpotent group. $\Gamma^*(G)$ has an universal vertex if $G$ is isomorphic to one of the following groups: 
	\begin{enumerate}
		\item $\mathbb{Z}_{p^rq}$, where $p,q$ are distinct primes.
		\item $\mathbb{Z}_{p}\rtimes \mathbb{Z}_{q}$, where $p,q$ are distinct primes and $q|p-1$.
		\item $\mathbb{Z}_{p^{n-1}}\times \mathbb{Z}_{p}$, where $p$ is a prime.
		\item $M_{p^n}=\langle a,b: a^{p^{n-1}}=b^p=e; b^{-1}ab=a^{1+p^{n-2}} \rangle$, where $p$ is a prime.
		\item $D_{2^{n}}=\langle a,b: a^{2^{n-1}}=b^2=e; bab=a^{-1} \rangle$.
		\item $Q_{2^{n}}=\langle a,b: a^{2^{n-1}}=e; b^2=a^{2^{n-2}}; b^{-1}ab=a^{-1} \rangle$.
		\item $SD_{2^{n}}=\langle a,b: a^{2^{n-1}}=b^2=e; bab=a^{-1+2^{n-2}}\rangle$.
	\end{enumerate}
\end{theorem}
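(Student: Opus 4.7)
The approach splits naturally into two parts. For families (1) and (2), both groups have order $pq$ for distinct primes, so Theorem \ref{star-graph} tells us $\Gamma(G)$ is a star. Moreover $\Gamma(G)$ has no isolated vertex in either situation: by Proposition \ref{preli-result}(4), $|G/\Phi(G)|$ is divisible by both $p$ and $q$, forcing $\Phi(G) = \{e\}$, so Theorem \ref{isolated-vertex-theorem} rules out isolated vertices (in the nilpotent case (1); for the non-nilpotent case (2), one reads it off directly from the star structure exhibited in the proof of Theorem \ref{star-graph}). Hence $\Gamma^*(G) = \Gamma(G)$ and the center of the star is the required universal vertex.

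For families (3)--(7) I would argue uniformly that the cyclic subgroup $C := \langle a\rangle$ of order $p^{n-1}$ (with $p=2$ for (5)--(7)) is a universal vertex of $\Gamma^*(G)$. Two structural facts are immediate from the presentations: $C$ has index $p$ in $G$, hence is a maximal subgroup; and $C$ is normal in $G$, either because $G$ is abelian (case (3)) or because the defining relation gives $b^{-1}ab \in \langle a\rangle$ (cases (4)--(7)).

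The central step is the identification $\Phi(G) = \langle a^p\rangle$. Using the standard formula $\Phi(G) = G^p[G,G]$ for a finite $p$-group, one checks that $b^p \in \langle a\rangle$ in every presentation, so $G^p = \langle a^p\rangle$, and that $[a,b] \in \langle a^p\rangle$: explicitly $[a,b] = a^{p^{n-2}}$ in (4), $[a,b] = a^{-2}$ in (5) and (6), and $[a,b] = a^{-2+2^{n-2}}$ in (7); case (3) is abelian and the computation is trivial. Consequently every proper subgroup of $C$ lies inside $\Phi(G)$, so Theorem \ref{isolated-vertex-theorem} makes each such subgroup an isolated vertex of $\Gamma(G)$, and the only vertex of $\Gamma^*(G)$ contained in $C$ is $C$ itself.

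Finally, for any vertex $K$ of $\Gamma^*(G)$ distinct from $C$, we must have $K \not\subseteq C$ (otherwise $K \subseteq \Phi(G)$ and $K$ would be isolated). Normality of $C$ then makes $CK$ a subgroup of $G$ strictly containing $C$, and maximality forces $CK = G$, i.e.\ $C \sim K$. Since $G$ has at least one maximal subgroup other than $C$ (for $n\geq 3$ one verifies this case by case from the presentations; for the degenerate small-$n$ cases of (3) the claim is already covered by Theorem \ref{complete}), $C$ itself is non-isolated, hence a vertex of $\Gamma^*(G)$, and is therefore universal. The main obstacle I anticipate is simply the bookkeeping of carrying out the five commutator computations cleanly under a single argument; each computation in isolation is a one-line verification, but one must be slightly careful about the smallest values of $n$ in each family, where $\Phi(G)$ may be trivial and $\Gamma^*(G) = \Gamma(G)$ reduces to an already covered complete or star graph.
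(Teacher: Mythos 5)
Your argument establishes the implication exactly as the statement is worded (if $G$ is one of the listed groups, then $\Gamma^*(G)$ has a universal vertex), and that part is correct: for families (1) and (2) the order-$pq$ case of Theorem \ref{star-graph} gives a star, which has no isolated vertices, so $\Gamma^*(G)=\Gamma(G)$ and the centre is universal; for families (3)--(7) your subgroup $C=\langle a\rangle$ is maximal and normal, every proper subgroup of $C$ lies in $\Phi(G)$ and is therefore isolated by Theorem \ref{isolated-vertex-theorem}, and normality plus maximality give $CK=G$ for every remaining vertex $K$. Two small remarks: the five commutator computations are unnecessary, since for a finite $p$-group $a^p\in G^p\subseteq\Phi(G)$ already puts $\langle a^p\rangle$, hence every proper subgroup of the cyclic group $C$, inside $\Phi(G)$; and what you prove here is essentially a fleshed-out version of the two ``observations'' with which the paper closes its proof, so for this direction your route and the paper's coincide, with yours supplying the details the paper omits.

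The substantive difference is one of scope. The paper's proof of this theorem is almost entirely devoted to the reverse implication, which the theorem is evidently meant to carry (the corollary immediately after it, on the domination number of $\Gamma^*(G)$, quotes it as an equivalence): assuming $\Gamma^*(G)$ has a universal vertex $H$ and $\Phi(G)$ is nontrivial, the paper shows $H$ is a maximal, cyclic subgroup, that $H\cap K=\Phi(G)$ for every other maximal subgroup $K$, hence that the intersection number satisfies $\iota(G)=2$; nilpotency and the formula $\iota(G)=\sum_i\iota(P_i)$ from \cite{intersection-number} then force $|\pi(G)|\leq 2$, leading either to a cyclic group $\mathbb{Z}_{p^aq}$ with $a\geq 2$ (which, as an aside, the stated list does not even contain) or to a non-cyclic $p$-group with a cyclic maximal subgroup, and the latter case is settled by the classification in \cite{prime-power-groups-book}, producing families (3)--(7). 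None of this appears in your proposal. So judged against the paper's own proof, you have proved only the half that the paper dispatches in two lines; if the statement is read as the characterization the paper intends, proves, and later uses, the classification (necessity) direction is the genuine missing content.
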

\begin{proof}
	Let $\Gamma^*(G)$ has an universal vertex. If $\Gamma^*(G)=\Gamma(G)$, then $G$ is as described in Theorem \ref{Gamma(G)-universal}. If $\Gamma^*(G)\neq \Gamma(G)$, then $\Phi(G)$ is non-trivial. Let $H$ be an universal vertex of $\Gamma^*(G)$.
	
	{\bf Claim 1:} $H$ is a maximal subgroup.\\
	{\it Proof of Claim 1:} If not, then there exists a proper subgroup $L$ with $H \subsetneq L\neq G$. Then $HL=L\neq G$, i.e., contradicting that $H$ is an universal vertex, unless $L$ is an isolated vertex of $\Gamma(G)$. However, in that case, $H \subsetneq L\subseteq \Phi(G)$, a contradiction.
	
	{\bf Claim 2:} $H$ is cyclic.\\
	{\it Proof of Claim 2:} If $G$ has exactly one maximal subgroup $H$, then $G$ is a cyclic $p$-group and hence $\Gamma^*(G)$ is an empty graph. Thus $G$ has at least two maximal subgroups and hence $\Phi(G)\subsetneq H$. Let $a \in H \setminus \Phi(G)$. Then $\langle a \rangle \subseteq H$ and $\langle a \rangle \not\subseteq \Phi(G)$. If $\langle a \rangle$ is a proper subgroup of $H$, then $\langle a \rangle H=H\neq G$, i.e., $\langle a \rangle \not\sim H$ in $\Gamma^*(G)$, a contradiction. Thus $H=\langle a \rangle$.
	
	Note that all subgroups of $H$ are contained in $\Phi(G)$, i.e., $H/\Phi(G)$ has no non-trivial proper subgroups. This implies that $H/\Phi(G)$ is a cyclic group of prime order, say $p$. Then $|H|=p|\Phi(G)|$. 
	
	Also note that all elements in $H \setminus \Phi(G)$ are generators of the cyclic group $H$. Thus by equating the number of generators of a finite cyclic group, we get
	\begin{equation}\label{no-of-generator-equation}
		|H|-|\Phi(G)|=\varphi(|H|), \mbox{ i.e., }(p-1)|\Phi(G)|=\varphi(|H|),
	\end{equation}
	where $\varphi$ denote the Euler's totient function.
	
	{\bf Claim 3:} Intersection of any maximal subgroup of $G$ with $H$ is $\Phi(G)$.\\
	{\it Proof of Claim 3:} Let $K$ be a maximal subgroup of $G$ other than $H$. Then $H \cap K\subsetneq H$. As proper subgroups of $H$ are contained in $\Phi(G)$, we have $H \cap K\subseteq \Phi(G)$. On the other hand, $\Phi(G)$ being the intersection of all maximal subgroups is contained in $H \cap K$. Thus $H\cap K=\Phi(G)$.  
	
	Thus, the intersection number of $G$, $\iota(G)=2$. As $G$ is nilpotent, by Theorem 4.7 of \cite{intersection-number}, we have $$2=\iota(G)=\sum_{i=1}^{k}\iota(P_i)=\sum_{i=1}^{k}r_i,$$ where $\pi(G)=\{p_1,p_2,\ldots,p_k\}$ and $P_i$ is a Sylow $p_i$-subgroup of $G$ with rank $r_i$. As the only partitions of $2$ are $1+1$ and $2+0$, $|\pi(G)|\leq 2$.
	
	{\bf Case 1: $2=1+1$.} In this case, $\pi(G)=\{p,q\}$ and $|G|=p^aq^b$ and $G\cong P\times Q$, where $P,Q$ are Sylow $p$ and $q$ subgroups of $G$ respectively of rank $1$ each, i.e., $|P/\Phi(P)|=p$ and $|Q/\Phi(Q)|=q$. Thus $$|G/\Phi(G)|=\dfrac{|G|}{|\Phi(G)|}=\dfrac{|P\times Q|}{|\Phi(P)\times \Phi(Q)|}=|P/\Phi(P)||Q/\Phi(Q)|=pq,$$ i.e., $|\Phi(G)|=p^{a-1}q^{b-1}$ and $|H|=p^aq^{b-1}$. Thus, if $b>1$, from Equation \ref{no-of-generator-equation}, we get $$(p-1)p^{a-1}q^{b-1}=\varphi(p^aq^{b-1})\Rightarrow q=q-1, \mbox{ a contradiction.}$$
	Thus, the only possibility is $b=1$, i.e., $|G|=p^aq, |H|=p^a, |\Phi(G)|=p^{a-1}$. Moreover, as $\Phi(G)$ is non-trivial, we have $a\geq 2$. Also, as $H$ is a maximal subgroup of a nilpotent group $G$, $H$ is normal in $G$. Hence, $H=P$ is the unique Sylow $p$-subgroup of $G$ and $G\cong H\times Q$, where $Q$ is the Sylow $q$-subgroup of $G$ of order $q$. Thus $G$ is a cyclic group of order $p^aq$, i.e., $G\cong \mathbb{Z}_{p^aq}$, with $a\geq 2$.
	
	{\bf Case 2: $2=2+0$.} In this case, $G$ is a $p$-group, i.e., say $|G|=p^n$ and $|H|=p^{n-1}, \Phi(G)=p^{n-2}$. Moreover $G$ can not be cyclic, as in that case $\Gamma^*(G)$ will be null graph. Thus, $G$ is a non-cyclic $p$-group with a cyclic subgroup of index $p$. Then by Theorem 1.2  \cite{prime-power-groups-book}, $G$ is isomorphic to one of the following groups:
	\begin{itemize}
		\item $\mathbb{Z}_{p^{n-1}}\times \mathbb{Z}_{p}$
		\item $M_{p^n}=\langle a,b: a^{p^{n-1}}=b^p=e; b^{-1}ab=a^{1+p^{n-2}} \rangle$
		\item $D_{2^{n}}=\langle a,b: a^{2^{n-1}}=b^2=e; bab=a^{-1} \rangle$
		\item $Q_{2^{n}}=\langle a,b: a^{2^{n-1}}=e; b^2=a^{2^{n-2}}; b^{-1}ab=a^{-1} \rangle$
		\item $SD_{2^{n}}=\langle a,b: a^{2^{n-1}}=b^2=e; bab=a^{-1+2^{n-2}}\rangle$.
	\end{itemize}
	The converse part follows immediately from the following observations: 
	\begin{itemize}
		\item $\mathbb{Z}_{p^{n-1}}\times \{[0]\}$ is an universal vertex in $\Gamma^*(G)$, when $G \cong \mathbb{Z}_{p^{n-1}}\times \mathbb{Z}_{p}$.
		\item $\langle a \rangle$ is an universal vertex in $\Gamma^*(G)$, when $G \cong M_{p^n},D_{2^{n-1}},Q_{2^{n}}$ or $SD_{2^{n}}$.
	\end{itemize} 
\end{proof}

\begin{corollary}
	Let $G$ be a finite nilpotent group. The domination number of $\Gamma^*(G)$ is $1$ if and only if $G$ is one of the groups mentioned in Theorem \ref{Gamma*-universal}.\qed 
\end{corollary}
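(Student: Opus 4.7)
The plan is to reduce this corollary immediately to Theorem \ref{Gamma*-universal} via the elementary graph-theoretic observation that, for any graph, the domination number equals $1$ if and only if the graph possesses a universal vertex. Indeed, if $\{v\}$ is a dominating set, then every vertex distinct from $v$ must be adjacent to $v$ by the definition of domination, so $v$ is universal; conversely, a universal vertex by itself trivially forms a dominating set of size one.

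Once this equivalence is in hand, the corollary is instantaneous. First I would state the equivalence in one line, then invoke Theorem \ref{Gamma*-universal} to conclude that for a finite nilpotent $G$, the presence of a universal vertex in $\Gamma^*(G)$ is equivalent to $G$ being isomorphic to one of the seven listed groups, which combined with the equivalence above gives the corollary.

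I do not expect any genuine obstacle. The only minor point worth a brief remark is to ensure $\Gamma^*(G)$ is nonempty (so that ``domination number $1$'' is meaningful); this is automatic, since the converse part of Theorem \ref{Gamma*-universal} explicitly exhibits a universal vertex (and hence at least one vertex) in each of the listed cases. Because the argument is essentially a single sentence, the write-up can simply cite Theorem \ref{Gamma*-universal} after the one-line observation, without repeating any of the case analysis.
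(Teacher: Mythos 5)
Your proposal is correct and matches the paper's treatment: the paper states this corollary with no written proof (the \qed indicates it is considered immediate), relying on exactly the observation you make, that a graph has domination number $1$ precisely when it has a universal vertex, combined with Theorem \ref{Gamma*-universal}. Your extra remark about $\Gamma^*(G)$ being nonempty in the listed cases is a harmless (and slightly more careful) addition.
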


\begin{theorem}\label{bipartite-theorem}
	Let $G$ be a finite nilpotent group. Then $\Gamma(G)$ is bipartite if and only if $G$ is a cyclic group of order $p^a$ or $p^aq^b$, where $p,q$ are distinct primes.
\end{theorem}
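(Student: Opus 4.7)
The plan is to prove both directions by exploiting the Sylow decomposition $G \cong P_1 \times \cdots \times P_k$ of the nilpotent group $G$. For the converse direction, which is essentially computational, I would split into two cases. If $G \cong \mathbb{Z}_{p^a}$, then Theorem \ref{edgeless} gives that $\Gamma(G)$ is edgeless, and hence trivially bipartite. If $G \cong \mathbb{Z}_{p^a q^b}$, I would list the proper nontrivial subgroups as $\langle p^i q^j \rangle$ for $(i,j) \in \{0,\ldots,a\} \times \{0,\ldots,b\} \setminus \{(0,0),(a,b)\}$, and observe that $\langle p^i q^j \rangle \cdot \langle p^{i'} q^{j'} \rangle = G$ iff $\min(i,i')=\min(j,j')=0$. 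This immediately shows that the subgroups with $i,j\geq 1$ are isolated (they are contained in $\Phi(G) = \langle pq\rangle$), while the non-isolated vertices split into the two independent sets $\{\langle p^i\rangle : 1\le i\le a\}$ and $\{\langle q^j\rangle : 1\le j\le b\}$ with every cross-pair adjacent, so $\Gamma^*(G) \cong K_{a,b}$. Adding isolated vertices preserves bipartiteness.

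For the forward direction, I would argue contrapositively: if $G$ is a nilpotent group not of the required form, I exhibit a triangle in $\Gamma(G)$. The first step is to show $|\pi(G)| \leq 2$. If three distinct primes divide $|G|$, take the Hall complements $H_i = \prod_{j \neq i} P_j$ for $i=1,2,3$. These are distinct proper subgroups and $H_i H_j = G$ for $i \ne j$ because the product of any two of them covers every Sylow factor; this produces a triangle.

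The second step, handling both $k=1$ and $k=2$, is to force each Sylow factor to be cyclic. If some Sylow subgroup $P$ is non-cyclic, then $|P/\Phi(P)| \geq p^2$, so $P$ has at least $p+1 \geq 3$ maximal subgroups $M_1, M_2, M_3$. In the case $k=1$ (so $G=P$), the $M_i$ are normal (Proposition \ref{preli-result}(3)) and distinct maximal subgroups, hence $M_iM_j = G$ for $i \ne j$ and they form a triangle. In the case $k=2$, say $G = P \times Q$, set $N_i = M_i \times Q$; then $N_i N_j = M_iM_j \times Q = P \times Q = G$, again producing a triangle. So every Sylow factor must be cyclic, which combined with $|\pi(G)| \leq 2$ yields $G \cong \mathbb{Z}_{p^a}$ or $G \cong \mathbb{Z}_{p^aq^b}$, as required.

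The main obstacle is really only bookkeeping: confirming in each triangle construction that the three subgroups are genuinely distinct, proper, and nontrivial vertices of $\Gamma(G)$ (all of which are immediate from the index computations), and making the explicit lattice calculation for $\mathbb{Z}_{p^aq^b}$ sufficiently careful to identify the isolated vertices. No single step is technically hard; the argument is a clean application of the fact that in a nilpotent group distinct maximal subgroups always have product $G$.
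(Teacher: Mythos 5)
Your proof is correct, but it is organized differently from the paper's. The paper's argument runs on the number of maximal subgroups of $G$: one maximal subgroup gives a cyclic $p$-group (edgeless graph); at least three maximal subgroups $M_1,M_2,M_3$ give a triangle at once, since in a nilpotent group distinct maximal subgroups are normal and multiply to $G$; and exactly two maximal subgroups forces $G\cong \mathbb{Z}_{p^aq^b}$ by Proposition \ref{preli-result}(2), after which the bipartition $\{\langle p^i\rangle\}\cup\{\langle q^j\rangle\}$ is exhibited. You instead argue contrapositively through the Sylow decomposition: Hall complements give a triangle when $|\pi(G)|\geq 3$, and a non-cyclic Sylow factor $P$ (via $|P/\Phi(P)|\geq p^2$, i.e.\ the Burnside basis argument) supplies three maximal subgroups of $P$, lifted by $\times\, Q$ when there is a second prime, again giving a triangle; what remains is exactly $G\cong\mathbb{Z}_{p^a}$ or $\mathbb{Z}_{p^aq^b}$. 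Both proofs ultimately rest on the same key fact that distinct maximal subgroups of a nilpotent group have product $G$; the paper's route is shorter because Proposition \ref{preli-result}(1)--(2) absorbs the classification work, while yours is more self-contained (it does not need the classification of groups with at most two maximal subgroups) and gives slightly more information in the converse direction, namely the explicit identification of the isolated vertices of $\Gamma(\mathbb{Z}_{p^aq^b})$ with the subgroups inside $\Phi(G)=\langle pq\rangle$ and the isomorphism $\Gamma^*(\mathbb{Z}_{p^aq^b})\cong K_{a,b}$, which the paper only states implicitly through the bipartition.
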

\begin{proof}
	By Theorem \ref{connected-component}, it follows that $\Gamma(G)$ is connected, except a few possible isolated vertices. As isolated vertices does not affect the bipartiteness of a graph, we ignore the isolated vertices. If $G$ has a unique maximal subgroup, then $G$ is cyclic $p$-group and $\Gamma(G)$ is edgeless and hence bipartite.
	
	If $G$ has at least $3$ maximal subgroups, say $M_1,M_2$ and $M_3$, then, as $G$ is nilpotent, we have $M_1M_2=M_2M_3=M_3M_1=G$, i.e., we get a $3$-cycle $M_1\sim M_2\sim M_3\sim M_1$. Hence $\Gamma(G)$ is non-bipartite.	
	
	So we assume that $G$ has exactly two maximal subgroups $M_1$ and $M_2$. Then, by Proposition \ref{preli-result}(2), $G$ is a cyclic group of $p^aq^b$, where $p,q$ are distinct primes, i.e., $G \cong \mathbb{Z}_{p^aq^b}$. Then the vertices of $\Gamma^*(G)$ can be partitioned into two partite sets $V_1=\{\langle p \rangle, \langle p^2 \rangle,\ldots, \langle p^{a} \rangle \}$ and $V_2=\{\langle q \rangle, \langle q^2 \rangle,\ldots, \langle q^{b} \rangle \}$, thereby making it bipartite. 
\end{proof}	

\begin{theorem}\label{girth-theorem}
	Let $G$ be a finite nilpotent group. Then $$\mbox{girth}(\Gamma^*(G))=\left\lbrace \begin{array}{lll}
		\infty, & \mbox{ if }G \cong \mathbb{Z}_{p^a} \mbox{ or }\mathbb{Z}_{p^aq}, \mbox{ where }a\geq 1& \\
		4, & \mbox{ if }G \cong \mathbb{Z}_{p^aq^b},\mbox{ where }a,b\geq 2&p,q \mbox{ are distinct primes.}\\
		3, & \mbox{ otherwise.}&
	\end{array}  \right.$$.
\end{theorem}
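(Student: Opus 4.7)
The plan is to do a case analysis driven by one key observation: in a nilpotent group $G$, any two distinct maximal subgroups $M, N$ are both normal, so $MN$ is a subgroup properly containing $M$, hence $MN = G$ by maximality. In particular, any three distinct maximal subgroups form a triangle in $\Gamma^*(G)$. The three cases of the theorem turn out to correspond essentially to $G$ having $1$, $2$, or $\ge 3$ maximal subgroups, as classified by Proposition \ref{preli-result}.

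For the $\infty$ and $4$ cases, I work with the explicit subgroup lattice of a cyclic group. If $G \cong \mathbb{Z}_{p^a}$, then $\Gamma(G)$ is edgeless by Theorem \ref{edgeless}, so $\Gamma^*(G)$ is empty and the girth is $\infty$. For $G \cong \mathbb{Z}_{p^a q^b}$ with $a \ge 1$, $b \ge 1$, subgroups correspond to divisors, and two subgroups of orders $p^{i_1} q^{j_1}$ and $p^{i_2} q^{j_2}$ multiply to $G$ iff $\max(i_1, i_2) = a$ and $\max(j_1, j_2) = b$. In the $\mathbb{Z}_{p^a q}$ case ($b = 1$), this forces one factor to be the unique subgroup of order $p^a$ and the other to have order $p^i q$ with $0 \le i \le a-1$, so $\Gamma^*(G) \cong K_{1,a}$, a star, giving girth $\infty$. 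In the case $a, b \ge 2$, the same calculation shows edges occur only between the $b$ subgroups of order $p^a q^j$ ($0 \le j \le b-1$) and the $a$ subgroups of order $p^i q^b$ ($0 \le i \le a-1$), giving $\Gamma^*(G) \cong K_{a,b}$ with $a, b \ge 2$. This bipartite graph contains a $4$-cycle but no odd cycle, so the girth is $4$.

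In the ``otherwise'' case, my goal is to exhibit three distinct maximal subgroups of $G$; by the opening observation, they form a triangle and thus the girth is $3$. The remaining nilpotent groups split as (a) $G$ cyclic with $|\pi(G)| \ge 3$, where each prime divisor yields a distinct maximal subgroup (of the corresponding prime index), so there are at least three; and (b) $G$ non-cyclic, in which case I write $G = P_1 \times \cdots \times P_k$ as the product of its Sylow subgroups. If $k = 1$ then $G$ is a non-cyclic $p$-group, and $G/\Phi(G) \cong \mathbb{F}_p^r$ with $r \ge 2$, so $G$ has at least $\frac{p^r - 1}{p - 1} \ge p + 1 \ge 3$ maximal subgroups. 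If $k \ge 2$, non-cyclicity forces some $P_i$ to be non-cyclic and hence to contribute at least two distinct maximal subgroups $M_1 \times \prod_{j \ne i} P_j$ and $M_2 \times \prod_{j \ne i} P_j$ of $G$, while any other factor $P_\ell$ contributes at least one, giving at least three maximal subgroups overall.

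The main obstacle is the second case: one has to recognize that for $G \cong \mathbb{Z}_{p^a q^b}$ the graph $\Gamma^*(G)$ is genuinely $K_{a,b}$ (not just a graph containing a $4$-cycle), since a priori one must rule out triangles. This is settled by the $\max$-criterion above, which forces the vertex set of $\Gamma^*(G)$ into the two families labelled by having $p$-part $p^a$ or $q$-part $q^b$, with no edges inside either family. The ``otherwise'' analysis is a routine but careful enumeration of nilpotent possibilities, relying on Proposition \ref{preli-result}(2) to guarantee that having exactly two maximal subgroups is equivalent to being one of the explicit cyclic forms already handled.
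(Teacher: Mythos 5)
Your proof is correct and follows essentially the same route as the paper: the trichotomy is driven by the number of maximal subgroups, the triangle for the ``otherwise'' case comes from the fact that any two distinct maximal subgroups of a nilpotent group are normal and hence multiply to $G$, and the $\mathbb{Z}_{p^aq^b}$ case is settled by the star/$4$-cycle structure. The only cosmetic differences are that you identify $\Gamma^*(\mathbb{Z}_{p^aq^b})\cong K_{a,b}$ explicitly instead of citing Theorem \ref{bipartite-theorem} to exclude triangles, and you verify the ``at least three maximal subgroups'' count directly from the Sylow decomposition rather than passing through Proposition \ref{preli-result}(2); both are sound.
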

\begin{proof}
	By Corollary \ref{Gamma*-connected}, it follows that $\Gamma^*(G)$ is connected. If $G$ has a unique maximal subgroup, then $G$ is cyclic $p$-group and $\Gamma(G)$ is edgeless.
	
	If $G$ has at least three maximal subgroups, say $M_1,M_2$ and $M_3$, then, as $G$ is nilpotent, we have $M_1M_2=M_2M_3=M_3M_1=G$, i.e., we get a $3$-cycle $M_1\sim M_2\sim M_3\sim M_1$. Hence girth of $\Gamma^*(G)$ is $3$.	
	
	So we assume that $G$ has exactly two maximal subgroups $M_1$ and $M_2$. Then by Proposition \ref{preli-result}(2), $G \cong \mathbb{Z}_{p^aq^b}$ and by Theorem \ref{bipartite-theorem}, $\Gamma^*(G)$ is bipartite. If any of $a$ or $b$ is $1$, then $\Gamma(G)$ is a star and has no cycle, i.e., girth of $\Gamma(G)$ is $\infty$. If both $a,b\geq 2$, then we have the $4$-cycle $\langle p \rangle \sim \langle q \rangle\sim \langle p^2 \rangle\sim \langle q^2 \rangle\sim \langle p \rangle$ in $\Gamma^*(G)$ and hence girth of $\Gamma^*(G)$ is $4$.  
\end{proof}

\section{Conclusion and Open Issues}
In this paper, we continued the study of co-maximal subgroup graph $\Gamma(G)$ and introduced the deleted co-maximal subgroup graph $\Gamma^*(G)$ of a group $G$. We discuss its various properties like connectdeness, girth and bipartiteness. However, there are natural questions which are yet to be resolved.

The first question arises from Remark \ref{diam=4-remark}.

{\bf Question 1:} Does there exist a finite group $G$ such that $diam(\Gamma^*(G))=4$?

On the light of Corollary \ref{nilpotent-diameter-bound} and \cite{akbari} Theorem 2.2, we can say that if such a group $G$ exists, then it must be non-nilpotent and $\Gamma(G)$ must have isolated vertices.

The next question arises from Remark \ref{non-solvable-connected-remark}.

{\bf Question 2:} Does there exist a finite non-solvable group such that $\Gamma^*(G)$ is disconnected?

It is noted that $\Gamma(\mathbb{Z}_6)\cong \Gamma(\mathbb{Z}_{15})\cong K_2$, i.e., non-isomorphic groups can have isomorphic co-maximal subgroup graphs. This leads us to the next question.

{\bf Question 3:} Under what condition, $\Gamma(G_1)\cong \Gamma(G_2)$ implies $G_1\cong G_2$?

The results in this paper and in \cite{akbari}, mainly concludes about some graph properties of $\Gamma(G)$ and $\Gamma^*(G)$ from the group properties of $G$. However, in order to address the above question, we need to find some results in which we can get some information about the group $G$ from the graph properties of $\Gamma(G)$ and $\Gamma^*(G)$. Some of such results will appear in a sequel of this paper.

\section*{Acknowledgement}
The first and second authors acknowledge the funding of DST-SERB-SRG Sanction no. $SRG/2019/000475$ and $SRG/2019/000684$, Govt. of India. The first author also acknowledges the financial support from DST-FIST [File No. $SR/FST/MS-I/2019/41$].

\section*{Conflict of Interests}
On behalf of all authors, the corresponding author states that there is no conflict of interest.

\section*{References}

\section*{Statements \& Declarations}
\subsection*{Funding}
The first and second authors acknowledge the funding of DST-SERB-SRG Sanction no. $SRG/2019/000475$ and $SRG/2019/000684$, Govt. of India.
\subsection*{Competing Interests}
The authors have no relevant financial or non-financial interests to disclose.
\subsection*{Author Contributions}
All authors contributed to the study conception and design. Material preparation and analysis were performed by Manideepa Saha and Angsuman Das. The first draft of the manuscript was written by Manideepa Saha and all authors commented on previous versions of the manuscript. All authors read and approved the final manuscript.
\subsection*{Data availability}
Data sharing not applicable to this article as no datasets were generated or analysed during the current study.
\end{document}